\documentclass[psamsfonts,a4paper,11pt]{amsart}
\usepackage[all,arc]{xy}
\usepackage{tikz}
\usepackage[all]{xy}
\usepackage{latexsym}
\usepackage{amssymb}
\usepackage{multirow}
\usepackage{amsmath}
\newtheorem{definition}{Definition}[section]
\newtheorem{lemma}[definition]{Lemma}
\newtheorem{prop}[definition]{Proposition}
\newtheorem{theorem}[definition]{Theorem}

\newtheorem{conj}[definition]{Conjecture}

\newtheorem{ques}[definition]{Question}

\theoremstyle{definition}\newtheorem{exam}[definition]{Example}
\newtheorem{fact}[definition]{Fact}

\newcommand*{\ov}{\overline}

\newcommand*{\sq}{\sqrt}
\newcommand*{\bsm}{\left(\begin{smallmatrix}}
\newcommand*{\esm}{\end{smallmatrix}\right)}
\newcommand*{\bp}{\begin{pmatrix}}
\newcommand*{\ep}{\end{pmatrix}}

\newcommand*{\lf}{\lfloor}
\newcommand*{\rf}{\rfloor}

\renewcommand{\emph}{\textbf}

\newcommand{\End}{\mathop{{\rm End}}\nolimits}
\newcommand{\GL}{\mathop{{\rm GL}}\nolimits}

\newcommand{\GO}{\mathop{{\rm GO}}\nolimits}
\newcommand{\GU}{\mathop{{\rm GU}}\nolimits}
\newcommand{\Jac}{\mathop{{\rm Jac}}\nolimits}
\renewcommand{\O}{\mathop{\raisebox{0pt}{\rm O}}\nolimits}

\newcommand{\SL}{\mathop{{\rm SL}}\nolimits}
\newcommand{\SU}{\mathop{{\rm SU}}\nolimits}
\newcommand{\PSL}{\mathop{{\rm PSL}}\nolimits}
\newcommand{\PSU}{\mathop{{\rm PSU}}\nolimits}
\newcommand{\Soc}{\mathop{{\rm Soc}}\nolimits}
\newcommand{\Sp}{\mathop{{\rm Sp}}\nolimits}

\newcommand{\PSp}{\mathop{{\rm PSp}}\nolimits}
\newcommand{\Sz}{\mathop{{\rm Sz}}\nolimits}

\newcommand*{\F}{\mathbb{F}}

\newcommand*{\Q}{\mathbb{Q}}

\newcommand*{\Z}{\mathbb{Z}}

\newcommand*{\al}{\alpha}

\newcommand*{\eps}{\varepsilon}
\newcommand*{\ga}{\gamma}

\newcommand*{\lam}{\lambda}

\renewcommand*{\phi}{\varphi}

\textwidth=16cm
\hoffset=-1.5cm
\textheight=24cm
\voffset=-1.5cm

\begin{document}

\footskip=30pt

\title{When the group ring of a finite group over a field is serial}

\author{Andrei Kukharev}
\address{Faculty of Mathematics, Vitebsk State University, Moscow av. 33, Vitebsk 210038, Belarus}
\email{kukharev.av@mail.ru}

\author{Gena Puninski}
\address{Faculty of Mechanics and Mathematics, Belarusian State University, av. Nezalezhnosti 4,
Minsk 220030, Belarus}
\email{punins@mail.ru}

\thanks{The first named author was partially supported by BFFR grant F15RM-025.}

\keywords{Group ring, serial ring, finite simple group.}

\begin{abstract}
We will describe finite simple groups whose group rings over a given field are serial.
\end{abstract}

\maketitle

\pagestyle{plain}

\section{Introduction}\label{S-intro}

Let $F$ be a field and let $G$ be a finite group. In this paper we will attempt to make a list of pairs $(F,G)$
such that the group ring $FG$ is serial, i.e.\ each indecomposable projective right (equivalently left) $FG$-module
has a unique composition series. Despite many important results has been proven, a complete description is
still elusive. First of all, if the characteristic $p$ of $F$ does not divide the order of $G$, then, by
Maschke's theorem, $FG$ is a semisimple artinian ring, hence serial. Thus we may assume that $p$ divides the
order of $G$, the modular case.

Since each artinian serial ring is of finite representation type, it follows from Higman \cite{Hig} that
seriality of $FG$ implies that each Sylow $p$-subgroup of $G$ is cyclic. If $p=2$ then this condition is
sufficient, because $G$ is 2-nilpotent in this case. This result can be extended to  $p$-solvable groups.
Namely, using Morita \cite{Mor}, one concludes that, if $G$ is a $p$-solvable group with a cyclic Sylow
$p$-subgroup, then the ring $FG$ is serial. Despite Morita worked over an algebraically closed field, this is
not a restriction, because Eisenbud and Griffith \cite{E-G} showed that seriality of $FG$ depends only on
characteristic of $F$.

However, if $G= \SL_2(5)$, then each Sylow $5$-subgroup of $G$ is cyclic, but the ring $F G$ is not serial for
any field of characteristic 5. Nevertheless, every group $G$ with a cyclic Sylow $p$-subgroup is close to a
$p$-solvable group. Namely, by Blau \cite{Blau}, if $G$ is not $p$-solvable, it admits a normal series
$\{e\}\subset O_{p'}\subset K\subset G$, where $O_{p'}$ is the largest normal subgroups of $G$ do not containing
elements of order $p$, and $K$ is the least normal subgroup of $G$ properly containing $O_{p'}$, in particular
$P\subset K$ and $H= K/O_{p'}$ is a simple nonabelian group. Here is the main conjecture.

\begin{conj}\label{c-main}
Suppose that $F$ is a field of characteristic $p$ dividing the order of $G$. Further assume that each Sylow
$p$-subgroup of $G$ is cyclic. Then the group ring $FG$ is serial if and only if either $G$ is $p$-solvable, or
the ring $FH$ is serial.
\end{conj}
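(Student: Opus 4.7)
The plan is to use the Blau normal series $\{e\}\subset O_{p'}\subset K\subset G$ to reduce seriality of $FG$ in two steps. Since the cyclic Sylow $p$-subgroup $P$ lies in $K$, the index $[G:K]$ is coprime to $p$, and $|O_{p'}|$ is coprime to $p$ by definition. Thus both reductions $G\leadsto K$ and $K\leadsto H=K/O_{p'}$ involve only $p'$-modifications, and the goal is to transfer seriality in both directions between $FG$, $FK$ and $FH$, using the case $G$ $p$-solvable (already handled by Morita, as cited) as the base.

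For the implication ``$FH$ serial implies $FG$ serial'' (in the non-$p$-solvable case), I would argue in two stages. From $H$ to $K$: since $O_{p'}\trianglelefteq K$ is a normal $p'$-subgroup, $F[O_{p'}]$ is semisimple, and a Fong--Reynolds-type decomposition should express each block of $FK$ as Morita-equivalent to a block of a (possibly twisted) group algebra $F^\al H^\ast$ for some $H^\ast\seq H$; combined with the cyclic Sylow hypothesis this should force each such block to be serial. From $K$ to $G$: since $FG$ is strongly $G/K$-graded over $FK$ with $p'$-grading group, Clifford theory relates the indecomposable projectives on the two sides, and seriality transfers along the grading since induction from $FK$ to $FG$ along a $p'$-extension preserves Loewy structure up to direct sums. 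The reverse implication is proved symmetrically, traversing $G\leadsto K\leadsto H$ by restriction and by passing to the quotient modulo the semisimple ideal generated by the nontrivial idempotents of $F[O_{p'}]$.

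The key structural input throughout is that, under the cyclic Sylow hypothesis, every block of $FG$, $FK$, $FH$ is a Brauer tree algebra (Dade--Janusz--Kupisch). Seriality of such a block corresponds to a very restricted shape of the associated Brauer tree --- essentially a star whose exceptional vertex sits at the centre --- so seriality can be read off combinatorial block invariants, and one can try to show these invariants transform predictably under the two reductions. By Eisenbud--Griffith one may moreover assume $F$ algebraically closed, which simplifies the character-theoretic bookkeeping.

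The main obstacle is the twisted group algebra appearing in the Fong--Reynolds step: seriality of the untwisted algebra $FH$ does not obviously imply seriality of $F^\al H^\ast$ for subgroups $H^\ast\seq H$ and nontrivial 2-cocycles $\al$. Verifying that the relevant stabilizers and cocycles are tame enough requires detailed knowledge of the Schur multipliers and subgroup structure of the finite simple groups $H$ with cyclic Sylow $p$-subgroups --- which is presumably why the paper first focuses on classifying seriality for such simple groups themselves, leaving the full conjecture as the culmination of that classification.
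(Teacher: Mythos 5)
You are attempting to prove Conjecture \ref{c-main}, which the paper itself does not prove: the authors only verify it computationally for groups of order at most $10^4$ (Proposition \ref{small}) and explicitly state that they do not know whether it holds in general even for $p=3$ (Question \ref{3}). So your text should be judged as a strategy sketch for an open problem, and as such it stalls at exactly the point where the authors stall. One direction of the equivalence is in fact unproblematic and already follows from facts recorded in the paper: if $FG$ is serial and $G$ is not $p$-solvable, then $FK$ is serial because $[G:K]$ is prime to $p$ (Fact \ref{feit}(2)), and $FH=F[K/O_{p'}]$ is serial because it is a factor ring of $FK$ and factor rings of serial rings are serial. You do not need any ``symmetric'' Clifford-theoretic traversal for this, and phrasing it as symmetric obscures that the two directions are of very different difficulty.

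The genuine gap is the step from $FH$ serial to $FK$ serial. The principal block of $FK$ is controlled by $FH$ (its annihilator contains $O_{p'}$), and blocks of $FK$ with $O_{p'}$ in their kernel are literally blocks of $FH$; but $FK$ also has blocks lying over nontrivial blocks of $F[O_{p'}]$, and seriality of $FK$ requires seriality of \emph{all} of them. Your Fong--Reynolds reduction converts such a block into a block of a twisted group algebra $F^\al[T/O_{p'}]$ where $T$ is an inertia subgroup, so $T/O_{p'}$ is a possibly proper subgroup of $H$ and $\al$ a possibly nontrivial cocycle. At this point the hypothesis ``$FH$ is serial'' gives no leverage: the Brauer tree, defect group and inertial index of such a twisted block of a proper subgroup are independent invariants, not determined by the Brauer trees of $FH$, and ``combined with the cyclic Sylow hypothesis this should force each such block to be serial'' is precisely the assertion that needs proof. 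You name this obstacle yourself in your last paragraph, but naming it is not overcoming it; the paper's brute-force enumeration of all extensions $U.H$ with $|U|$ small in Proposition \ref{small} is evidence that no conceptual argument for this step is currently known. Until that step is supplied, the proposal is not a proof.
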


In this paper we will complete the description of finite simple groups with serial group rings.

\begin{theorem}\label{t-main}
Let $H$ be a finite simple group and let $F$ be a field of characteristic $p$ dividing the order of $H$.
Then the group ring $FH$ is serial if and only if one of the following holds.

1) $H= C_p$.

2) $H= \PSL_2(q)$ and $p> 2$ divides $q-1$.

3) $H= \PSL_2(q)$, $q\neq 2$ or $H= \PSL_3(q)$, where $p=3$ and $q\equiv 2, 5 \pmod 9$.

4) $H= \PSU_3(q^2)$ and $p>2$ divides $q-1$.

5) $H= \Sz(q)$, $q= 2^{2n+1}$, $n\geq 1$, where either $p> 2$ divides $q-1$, or $p=5$ divides $q+r+1$, $r= 2^{n+1}$,
but $25$ does not divide this number.

6) $H= {}^2G_2(q^2)$, $q^2= 3^{2n+1}$, $n\geq 1$, where either $p>2$ divides $q^2-1$, or $p=7$ divides
$q^2+\sq{3} q+1$, but $49$ does not divide this number.

7) $H= M_{11}$, $p=5$ or $G= J_1$, $p=3$.
\end{theorem}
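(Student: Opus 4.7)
The plan is to reduce the problem to a combinatorial criterion on Brauer trees and then perform a case analysis based on the classification of finite simple groups. To this end I would first pass from $FH$ to its blocks: $FH$ is serial if and only if every block of $FH$ is serial, and blocks of defect zero are matrix algebras, hence serial. Under the cyclic Sylow $p$-subgroup assumption (necessary by Higman, as recalled in the introduction), every positive-defect block of $FH$ has cyclic defect group and, by the Brauer--Dade theorem, is Morita equivalent to the Brauer tree algebra associated with its Brauer tree $T$ and multiplicity function. Analysing the radical layers of the indecomposable projectives shows that such a Brauer tree algebra is serial if and only if every edge of $T$ has at least one endpoint which is a non-exceptional leaf; equivalently, $T$ is a star, and if $T$ has more than one edge then its exceptional vertex (if any) is the centre of the star. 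Thus the theorem reduces to a combinatorial check on the Brauer trees of the positive-defect $p$-blocks of $H$.

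Second, using the classification of finite simple groups together with the known list of finite simple groups with a cyclic Sylow $p$-subgroup for some prime $p$ dividing $|H|$, the candidates for $H$ are the cyclic groups $C_p$, the linear families $\PSL_2(q)$, $\PSL_3(q)$, $\PSU_3(q^2)$, the Suzuki groups $\Sz(q)$, the Ree groups ${}^2G_2(q^2)$, together with a finite list of further small groups including certain alternating and sporadic groups. For every such pair $(H,p)$ the Brauer trees of the $p$-blocks of $H$ have been determined in the literature (Brauer, Burkhardt, Fong--Srinivasan, Feit, Shamash, Hiss--Lux, Landrock and others). Applying the combinatorial criterion above to these trees produces precisely the seven cases in the statement: item (1) is immediate since $FC_p\cong F[x]/(x^p)$; items (2) and (4) follow from the star-shaped principal-block Brauer trees of $\PSL_2(q)$ and $\PSU_3(q^2)$ when $p>2$ divides $q-1$; item (3) is the $p=3$ case for $\PSL_2$ and $\PSL_3$, where the congruence $q\equiv 2,5\pmod 9$ forces the Sylow $3$-subgroup to have order $3$ and the Brauer tree of the principal block to be a star; items (5) and (6) cover the two arithmetic regimes for Suzuki and Ree groups, the non-divisibility by $p^2$ ensuring defect exactly one; item (7) is read off the modular atlas data for $M_{11}$ and $J_1$.

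Conversely, for every simple group $H$ with cyclic Sylow $p$-subgroup not appearing in items (1)--(7), the literature provides a $p$-block whose Brauer tree fails the criterion; the prototype, already singled out in the introduction, is $\SL_2(5)$ in characteristic $5$, whose principal block has a linear Brauer tree with the exceptional vertex at an endpoint, giving a non-uniserial projective. The main obstacle is organizational rather than conceptual: one has to traverse the classification case by case, identify every simple group with cyclic Sylow $p$-subgroup, locate the appropriate Brauer trees in a widely scattered literature, and, for each excluded group, certify non-seriality by exhibiting an explicit non-star Brauer tree. Once the trees are available, the combinatorial criterion renders the final verification mechanical.
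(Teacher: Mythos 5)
Your overall strategy---reduce to the ``star with exceptional vertex at the centre'' criterion for Brauer tree algebras and then traverse the classification---is exactly the paper's strategy, and your combinatorial criterion is stated correctly. But there is a genuine gap in the step you dismiss as merely organizational: you assert that for every relevant pair $(H,p)$ the Brauer trees ``have been determined in the literature,'' and that is false precisely in the case that constitutes the new content of this paper, namely the symplectic, orthogonal and unitary simple groups defined over fields with an \emph{even} number of elements (Fong--Srinivasan covers odd $q$ only). For these families the planar-embedded trees are not available, and the paper does not need them: it first shows the tree of each cyclic block is a \emph{line} (for $\Sp_{2m}(q)$ and $\GO^{\pm}_{2m}(q)$ via Gow's theorem that every element is a product of two involutions, so all characters are real-valued and the tree equals its real stem; for $\SU_n(q^2)$ via Robinson's rationality result), and then proves the line has more than two edges by bounding $e=|N_G(P)/C_G(P)|$ from below by the order $d$ of $q$ modulo $p$, using Singer cycles and explicit diagonal embeddings of $\GL_d$ into the classical group, together with the constraints from Stather's table that force $d$ to be large whenever $P$ is cyclic and nontrivial. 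Without an argument of this kind your ``mechanical verification'' cannot be carried out for these infinite families.

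Two smaller inaccuracies: the set of candidates is not ``a finite list of further small groups''---essentially every finite simple group has a cyclic Sylow $p$-subgroup for suitable $p$, so the alternating groups and all classical and exceptional families must each be handled as infinite families (the paper cites Scopes for $A_n$, Burkhardt for $\PSL_2$ and $\Sz$, Geck for $\SU_3$, Hiss for ${}^2G_2$, Hiss--L\"ubeck--Malle for exceptional types, etc.). Also your prototype counterexample $\SL_2(5)$ is not simple; for the theorem as stated the relevant example is $\PSL_2(5)\cong A_5$ with $p=5$, whose principal block is a line with the exceptional vertex of multiplicity $2$ at an end.
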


For instance, $A_5$ with $p=3$ occurs twice in this list: first as $\PSL_2(4)$ in 2), and then as $\PSL_2(5)$ in 3).

Using this theorem we verify Conjecture \ref{c-main} for groups of order $\leq 10^4$. However, we do not know
whether it holds true in general even for $p=3$; or if seriality of $FG$ implies seriality of $FH$ for each
normal subgroup $H$ of $G$.

The main tool in proving Theorem \ref{t-main} is the following well known characterization of seriality.
Namely, if $F$ is sufficiently large, then the group ring $FG$ is serial iff the Brauer tree of each $p$-block
of $G$ is a star whose exceptional vertex (if any) is located at its center. This allows us to use a well
developed machinery of Brauer trees to complete the proof of this theorem. In fact, due to previously obtained
results, the only remaining case is when $H$ is a classical (in fact symplectic, unitary or orthogonal) group
defined over a finite field of characteristic 2, and we address this case in the paper. We will try to make the
paper self-contained by subsuming and elaborating existing knowledge on serial group rings.

Of course, there is a broader context for the problem we consider. For instance, Tuganbaev
\cite[part of Probl. 16.9]{Tug} asks to characterize rings $R$ and (finite or infinite) groups $G$ such that
the ring $RG$ is serial. We will not comment on this general question, because it deserves a separate discussion,
but some connections are obvious. Say, if $R$ has a field as a factor, then all results of this paper are
applicable.

The essential part of this research is included in the first named author PhD thesis \cite{Kukh}, and otherwise
is scattered in \cite{K-P13a, K-P13b, VKP, K-P14, K-P15a, K-P15b, K-P16, K-P17}. The second named author
participated in this thesis as a supervisor. He thanks Chris Gill for the initial discussion in Prague in
Summer 2012, and for showing him how to check seriality using MAGMA. We are also indebted to Alexandre Zalesski,
Alexej Kondratiev, David Craven and Meinholf Geck for their comments.

\section{Basics}

All rings in this paper will be associative with unity, and, by default, a module means a unital right module
over a ring. A module $M$ is said to be \emph{uniserial}, if its lattice of submodules is a chain; and
$M$ is \emph{serial} if it is a direct sum of uniserial modules. We say that a ring $R$ is \emph{serial}, if
the right regular module $R_R$ is serial and the same holds true for the left module $_RR$. In fact, $R$ is
serial iff there exists a set $e_1, \dots, e_n$ of pairwise orthogonal idempotents which is \emph{complete},
i.e. $e_1+ \dots + e_n=1$, and each \emph{principal projective} module $e_iR$ is uniserial, so as each left
module $Re_i$. Further, this collection of idempotents is unique up to conjugation by a unit. For more on
general theory of serial rings the reader is referred to \cite{Punb}, or to more recent \cite{B-O}. We will
downsize to a more restrictive setting.

\emph{Artinian serial rings} were introduced by Nakayama (under the name of generalized uniserial rings), and
nowadays are known (see \cite{B-O}) as \emph{Nakayama rings}, or \emph{Nakayama algebras}, if they are algebras
over a field; though \cite{ARS} gives a broader meaning to the latter term. Recall that a serial ring is said
to be \emph{basic}, if different principal projective modules are not isomorphic. When classifying serial rings
one can always assume (modulo Morita equivalence) that $R$ is indecomposable and basic.

For instance, if $R$ is an indecomposable basic artinian serial rings with simple modules $S_i$, then
(see \cite[Cor. 12.4.2]{HGK}) all skew fields $D_i= \End (S_i)$ are isomorphic. Suppose that $R$ is  an
indecomposable basic finite dimensional Nakayama algebra over a perfect field and $D$ is the common value of
endomorphism rings of simple $R$-modules. It follows from Kupisch \cite[Satz 1, Hilfsatz 2.1]{Kup} that there
exists an automorphism $\al$ of $D$ and a proper (hence uniserial) factor $S= D[x, \al]/(x^k)$, with
Jacobson radical $J$, of the skew polynomial ring $D[x, \al]$ such that $R$ is a factor-ring of the
following \emph{blow-up} of $S$:

$$
\bsm S&S&\dots&S\\J&S&\dots&S\\\vdots&\vdots&\ddots&\vdots\\J&J&\dots&S\esm\,.
$$

\vspace{2mm}

Furthermore, these factor rings are easily described, - see \cite{Kup} for a complete list of invariants.
Using the blow-up construction a classification of artinian serial rings can be reduced to the case of
indecomposable basic rings with $n$ simple modules, such that the length of all principal projectives is a
constant equivalent to 1 modulo $n$; for instance, this happens for serial group rings of finite groups. No
reasonable classification is known in this case, - see \cite[p. 275--276]{B-O} for a list of open questions.

The very important property discovered by Nakayama is that any artinian serial ring $R$ is of
\emph{finite representation type}: each $R$-module is a direct sum of modules $e_iR/rR$, $r\in e_iRe_j$.

Suppose that $F$ is a field and $G$ is a finite group. We will be interested in when the group ring $FG$ is
serial. As we already mentioned, by Maschke's theorem, we may (and will) assume that the
\emph{characteristic $p$ of $F$ is finite and divides the order of $G$}.

An important necessary condition for seriality follows from Higman's theorem \cite{Hig} describing group rings
of finite representation type.

\begin{fact}\label{hig}
Let $F$ be a  field of characteristic $p$ dividing the order of $G$. If the group ring $FG$ is serial, then each
Sylow $p$-subgroup of $G$ is cyclic.
\end{fact}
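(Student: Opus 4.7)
The plan is to reduce this to two results that are already highlighted in the excerpt: the Nakayama observation that artinian serial rings are of finite representation type, and Higman's characterization of finite representation type for modular group algebras.

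First I would note that $FG$ is finite dimensional over $F$, since $G$ is finite, and therefore artinian. If in addition $FG$ is serial, then it is an artinian serial ring, so by the Nakayama property recalled in the preceding paragraph every finitely generated $FG$-module is a direct sum of modules of the form $e_i R/rR$ with $r\in e_i R e_j$. In particular there are only finitely many indecomposable $FG$-modules up to isomorphism, so $FG$ has finite representation type.

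Next I would invoke Higman's theorem \cite{Hig}: for a finite group $G$ and a field $F$ of characteristic $p$ dividing $|G|$, the group ring $FG$ has finite representation type if and only if the Sylow $p$-subgroups of $G$ are cyclic. Combining this with the previous step immediately gives the conclusion.

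No real obstacle is expected, since both ingredients are classical and the work is essentially a citation. If one wanted to make the proof more self-contained, the only nontrivial point that would need elaboration is why seriality forces finite representation type; this can be argued directly by observing that for a basic artinian serial ring each principal projective $e_i R$ has only finitely many submodules (its submodule lattice is a chain of length bounded by the composition length of $e_i R$), and every indecomposable $FG$-module is a homomorphic image of some $e_i R$, yielding only finitely many isomorphism classes of indecomposables. Everything else is a direct appeal to the stated results.
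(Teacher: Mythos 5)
Your proposal is correct and follows exactly the route the paper itself takes: it observes that an artinian serial ring is of finite representation type (Nakayama) and then cites Higman's theorem that $FG$ has finite representation type only if the Sylow $p$-subgroups are cyclic. The paper states this derivation in a single sentence in the introduction, and your optional elaboration of why seriality forces finite representation type is also sound.
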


As we have mentioned above, the converse is not true. For instance, let $F$ be an algebraically closed field and
let $G= \SL_2(p)$ for a prime $p\geq 5$. Then each Sylow $p$-subgroup of $G$ consists of $p$ elements, hence cyclic.
Further, it follows from \cite[p. 15]{Alp} that $FG$ has exactly $p$ simple modules $S_1, \dots, S_p$. Also,
for each $1< i< p-1$, the projective cover $P_i$ of $S_i$ is not uniserial: $\Jac(P_i)/\Soc(P_i)$ is the direct
sum of two simple modules $S_{p+1-i}$ and $S_{p-1-i}$.

In fact, this conclusion holds for any field $F$ of characteristic $p$, due to the following result by Eisenbud
and Griffith \cite{E-G} (see \cite{VKP} for another proof).

\begin{fact}\label{indep}
Let $F, F'$ be fields of characteristic $p$ dividing the order of $G$. Then the ring $FG$ is serial if and only
if the ring $F'G$ is serial.
\end{fact}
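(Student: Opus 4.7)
The plan is to reduce the statement to a single field extension $F\seq F'$ and then exploit the base change $F'G \cong FG\otimes_F F'$. Any two fields of characteristic $p$ embed into a common algebraic closure of $\F_p$, so it is enough to treat the case of an extension $F\seq F'$, in which $F'G$ is a faithfully flat $FG$-algebra and every idempotent of $FG$ remains an idempotent (though perhaps no longer primitive) of $F'G$.

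The first technical ingredient I would invoke is the classical identity
$J(F'G)=J(FG)\otimes_F F'$, which holds for group algebras because the semisimple quotient $FG/J(FG)$ is a separable $F$-algebra. Iterating gives $J(F'G)^n=J(FG)^n\otimes_F F'$ for every $n\geq 0$, so the radical filtrations on both sides correspond under the flat base change; for every idempotent $e\in FG$ the Loewy layer $eJ(F'G)^n/eJ(F'G)^{n+1}$ is just the $F'$-scalar extension of $eJ(FG)^n/eJ(FG)^{n+1}$.

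The second ingredient is the standard seriality criterion: a finite dimensional algebra $A$ is serial if and only if, for every primitive idempotent $e\in A$, each Loewy layer $eJ(A)^n/eJ(A)^{n+1}$ is simple (or zero) as a right $A/J(A)$-module, and symmetrically on the left. Combined with the previous step, seriality of $FG$ and of $F'G$ are both equivalent to simplicity statements for the same collection of modules over the separable semisimple quotients $FG/J(FG)$ and $F'G/J(F'G)$, related only by extension of scalars along $F\seq F'$.

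The crux of the argument is then a Galois-type descent principle for separable semisimple algebras: a right module $M$ over $FG/J(FG)$ is simple if and only if $M\otimes_F F'$ is a direct sum of simples over $F'G/J(F'G)$ indexed exactly by the primitive idempotents of $F'G$ that lie below a single primitive idempotent of $FG$. The forward direction is a routine scalar-extension argument; the backward direction is the main obstacle, and I would handle it by splitting $F\seq F'$ into its purely inseparable and separable parts and invoking standard Galois descent for separable semisimple algebras. Combining the descent with the seriality criterion then yields both implications of Fact \ref{indep}.
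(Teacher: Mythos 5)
The paper does not actually prove Fact \ref{indep}; it cites \cite{E-G} and \cite{VKP}. Your strategy --- base change of the Jacobson radical plus a Loewy-layer criterion for seriality --- is in the spirit of those proofs, so the architecture is reasonable, but two steps are genuinely broken. First, the opening reduction is false as stated: two fields of characteristic $p$ need not embed into a common algebraic closure of $\F_p$ (take $F'=\F_p(t)$, which contains a transcendental element). The correct reduction is to compare each field with the prime field, i.e.\ to prove that $FG$ is serial iff $\F_pG$ is, so that one only ever deals with an extension $\F_p\seq F$. This also makes your radical identity $\Jac(F'G)=\Jac(FG)\otimes_FF'$ honest rather than circular: it holds because $\F_p$ is perfect, so $\F_pG/\Jac(\F_pG)$ is separable, $\Jac(FG)=\Jac(\F_pG)\otimes_{\F_p}F$ for every $F$ of characteristic $p$, and only then does one know that $FG/\Jac(FG)$ is separable over $F$.

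Second, the step you yourself call ``the main obstacle'' is not proved, and the tool you propose would not prove it: over the purely inseparable part of an extension there is no Galois group, so ``Galois descent after splitting $F\seq F'$ into separable and inseparable parts'' does not apply exactly where it is needed. What actually rescues the argument is again descent to $\F_p$: by Wedderburn, every simple $\F_pG$-module has a finite (hence commutative and separable) field as endomorphism ring, so the endomorphism ring of every simple $FG$-module is a separable field extension of $F$, and therefore $S\otimes_FF'$ is a \emph{multiplicity-free} direct sum of pairwise non-isomorphic simples, with non-isomorphic simples remaining disjoint after base change. Granting this, the implication ``$FG$ serial $\Rightarrow F'G$ serial'' is a clean Loewy-layer computation (each primitive $e_i\leq e$ kills all but at most one summand of $S\otimes_FF'$). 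But the converse still does not follow from your counting scheme: a non-simple layer $e\Jac^n/e\Jac^{n+1}\cong S\oplus S'$ with $S\not\cong S'$ extends to a multiplicity-free semisimple module whose length may well not exceed the number of primitive idempotents below $e$, so no contradiction arises. To close that direction you need an additional input, for instance the classical fact (proved in \cite{E-G}) that an artinian ring is serial iff its quotient by the square of the radical is serial, which turns the question into a two-sided condition on $\mathrm{Ext}^1$ between simples where the count can be carried out in both directions. As written, the proposal names the crux but does not supply it.
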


From this we derive a useful sufficient condition for seriality. Recall that a group $G$ is said to be
\emph{$p$-nilpotent}, if its Sylow $p$-subgroup admits a normal complement $H$, hence $G$ is a semidirect
product $H:P$. The following result is contained in \cite[Thm. 4.3]{K-P13a}.

\begin{fact}\label{p-nilp}
Let $F$ be a field of characteristic $p$ dividing the order of $G$. Further assume that $G$ is $p$-nilpotent with
a cyclic Sylow $p$-subgroup. Then $FG$ is a (left and right) principal ideal ring, in particular it is serial.
\end{fact}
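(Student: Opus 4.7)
The plan is to realize $FG$ explicitly as a finite product of matrix algebras over truncated polynomial rings. By Fact~\ref{indep} I may enlarge $F$ to a finite splitting field for $G$ (hence perfect). Writing $G=H\rtimes P$ with $|H|$ coprime to $p$ and $P$ cyclic of order $p^m$, the group ring becomes a skew group ring $FG\cong FH\ast P$, with $P$ acting on $FH$ by conjugation. Maschke's theorem gives $FH\cong \prod_i M_{n_i}(F)$, and $P$ permutes the corresponding central primitive idempotents.

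Next I would decompose $FG$ according to the $P$-orbits on these idempotents. For an orbit of size $r$ with point stabiliser $Q\leq P$ and representative $e$ with $eFH\cong M_k(F)$, a standard crossed-product calculation identifies the corresponding block of $FG$ with $M_r\bigl(eFH\ast Q\bigr)$. By Skolem--Noether, each $q\in Q$ acts on $M_k(F)$ as conjugation by some $u_q\in\GL_k(F)$, and the failure of $q\mapsto u_q$ to be a homomorphism is a $2$-cocycle $\al\in Z^2(Q,F^\times)$ with trivial $Q$-action; absorbing the $u_q$ into the group elements yields an isomorphism $eFH\ast Q\cong M_k(F)\otimes_F F^\al Q$.

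Since $Q$ is cyclic of order $p^m$ and $F$ is perfect of characteristic $p$, the periodicity of cyclic cohomology gives $H^2(Q,F^\times)\cong F^\times/(F^\times)^{p^m}=0$, so $F^\al Q\cong FQ=F[x]/(x-1)^{p^m}$, a commutative local uniserial ring. Each block of $FG$ is therefore isomorphic to some $M_N(FQ)$. I would finish by noting that a full matrix ring over a commutative local uniserial Artinian ring $R$ is a principal ideal ring: by Smith normal form over $R$, any $R$-submodule of $R^N$ is generated by at most $N$ elements and is the row-module of a single $N\times N$ matrix, so every left ideal of $M_N(R)$ is principal; symmetrically on the right. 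Finally, $M_N(FQ)$ is Morita equivalent to the uniserial ring $FQ$, so its indecomposable projectives are uniserial, whence $FG$ is also serial.

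The one step requiring genuine care is the crossed-product identification and the cocycle calculation, where one has to choose coset representatives of $Q$ in $P$ and unit lifts $u_q$ consistently; the cohomological vanishing and the Smith-form argument are then routine.
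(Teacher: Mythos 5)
Your route is the classical one: since the paper states this fact with only a pointer to \cite[Thm.~4.3]{K-P13a}, I can only compare against the standard proof, and that is exactly your argument --- decompose $FG=FH\ast P$ along the $P$-orbits of the central primitive idempotents of the semisimple ring $FH$, identify the ring direct factor attached to an orbit of length $r$ with stabilizer $Q$ as $M_r(eFH\ast Q)$, use Skolem--Noether to untwist the $Q$-action on $eFH\cong M_k(F)$ at the cost of a cocycle $\al\in Z^2(Q,F^\times)$, and observe that $H^2(Q,F^\times)\cong F^\times/(F^\times)^{|Q|}$ vanishes because the Frobenius is surjective on a perfect field of characteristic $p$. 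Each factor is then $M_N(FQ)$ with $FQ\cong F[y]/(y^{|Q|})$ commutative local uniserial, and the Smith-form argument correctly shows such matrix rings are left and right principal ideal rings. All of these steps are sound.

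The one genuine gap is in your very first reduction. Fact~\ref{indep} transfers only \emph{seriality} between fields of the same characteristic; it says nothing about the principal ideal property. So your argument proves that $F'G$ is a principal ideal ring for a splitting field $F'$, and hence that $FG$ is serial for every $F$ of characteristic $p$, but it does not prove the actual conclusion of the statement --- that $FG$ itself is a (left and right) principal ideal ring --- for an arbitrary such $F$. This is not a cosmetic point: for artinian rings being a PIR is strictly stronger than being serial (the paper treats the descent of even the weaker ``radical is principal'' property to arbitrary fields as a separate nontrivial result, \cite[Thm.~2.3]{K-P13b}). To close the gap you should run the crossed-product analysis directly over $F$: the simple components of $FH$ are then $M_k(D)$ for division algebras $D$, the stabilizer $Q$ acts by automorphisms that are inner only up to an automorphism of $D$, and the corner ring becomes a matrix ring over a crossed product $D\ast Q$, which for cyclic $Q$ in characteristic $p$ is a local uniserial factor of a skew polynomial ring $D[y,\al]$ --- exactly the ring $W=\F_9[y,\al]/(y^3)$ appearing in Example~\ref{2-cov}. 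With that modification the conclusion holds over every $F$, and the rest of your proof goes through verbatim.
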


For instance, it is well known that each group $G$ with a cyclic $2$-Sylow subgroup is $2$-nilpotent, hence,
for $p=2$, the group ring $FG$ is serial iff each Sylow 2-subgroup of $G$ is cyclic. Thus from now on
we will assume that $p> 2$ when investigating seriality.

This result can be extended to a larger class of groups. Recall that a group $G$ is said to be \emph{$p$-solvable},
if it admits a composition series whose consecutive factors are either $p$-groups, or $p'$-groups. If $G$ has a
cyclic Sylow $p$-subgroup (the case of our main interest), then, by an old results by Wielandt \cite{Wie}, we
conclude that $G$ is $p$-solvable iff it possesses a 4-term series $\{e\}\subset O_{p'}\subset K\subset G$
of completely characteristic subgroups, where $K$ is a semidirect product $O_{p'}:P$ (hence $p$-nilpotent) and
$G/K$ is a cyclic $p'$-group.

\begin{fact}\label{solv-ser}
Let $F$ be a field of characteristic $p$ dividing the order of $G$. Further assume that $G$ is a
$p$-solvable group with a cyclic Sylow $p$-subgroup. Then the group ring $FG$ is serial.
\end{fact}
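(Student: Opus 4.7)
By Fact \ref{indep} we may enlarge $F$ to a splitting field for $G$ and all its subgroups; in particular the Brauer-tree criterion from the introduction becomes available. By the Wielandt structure recalled just above the statement, $G$ admits a characteristic series $\{e\}\subset O_{p'}\subset K\subset G$ with $K=O_{p'}:P$ ($P$ a cyclic Sylow $p$-subgroup of $G$) and $G/K$ cyclic of order coprime to $p$. Fact \ref{p-nilp} then says that $FK$ is a principal ideal ring, hence serial with all principal projectives uniserial of constant length $|P|$; in particular every nontrivial $p$-block of $FK$ has a Brauer tree which is a star with the exceptional vertex (if any) at its center.

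The main step is to propagate seriality from $FK$ to $FG$ across the cyclic $p'$-extension $G/K$. View $FG$ as a crossed product $FK*(G/K)$. Since $\gcd(|G/K|,p)=1$, Clifford theory (Fong--Reynolds) gives a block correspondence: each $p$-block $B$ of $FG$ is, up to Morita equivalence, obtained from a single $G/K$-stable block $b$ of $FK$ by extending $b$ by a cyclic $p'$-subgroup of $G/K$ and then inducing. Such a $p'$-extension cannot move the exceptional vertex away from the center of the tree, because it acts symmetrically on the $|P|$ ordinary characters lying in $b$ (which are freely permuted by a generator of the defect group $P$, and the $p'$-action commutes with this permutation); it can only permute the edges of the star and possibly multiply their count by a divisor of $|G/K|$. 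Hence every $p$-block of $FG$ still has a Brauer tree which is a star with exceptional vertex at the center, and so $FG$ is serial by the criterion in the introduction.

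The principal obstacle is the Clifford-theoretic bookkeeping in the second paragraph: one has to verify that the $2$-cocycle defining the crossed product $FG=FK*(G/K)$ does not disrupt the star-shape of the Brauer tree or the centrality of the exceptional vertex. Because $G/K$ is cyclic of order coprime to $p$, this cocycle contributes only a central simple twist whose index divides $|G/K|$; after passing to a sufficiently large $F$ it becomes trivial, so the Brauer tree is unaffected and the argument goes through. This is essentially the content of Morita's argument in \cite{Mor}, with the reduction to a large field supplied by Fact \ref{indep}.
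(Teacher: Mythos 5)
The paper's own proof of this fact is a two-line citation: Morita \cite{Mor} (and later Srinivasan \cite{Sri}) proved the statement over an algebraically closed field, and Fact \ref{indep} removes that hypothesis. Your proposal instead tries to reconstruct the argument from scratch, and its skeleton is the right one: pass to the characteristic $p$-nilpotent subgroup $K=O_{p'}:P$ supplied by Wielandt, apply Fact \ref{p-nilp} to get that $FK$ is serial, and then climb the cyclic $p'$-extension $G/K$.

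The gap is in the climbing step. The justification ``it acts symmetrically on the $|P|$ ordinary characters lying in $b$ (which are freely permuted by a generator of the defect group $P$, and the $p'$-action commutes with this permutation)'' is not a valid argument: the defect group does not act on, let alone freely permute, the ordinary irreducible characters of a block, and ``acts symmetrically'' is not a checkable assertion. What actually has to be shown at this point --- that a block of $FG$ covering a serial block of $FK$, with $[G:K]$ prime to $p$ and $K$ normal, is again serial --- is exactly the content of Fact \ref{feit}(2) (Feit, Cor.\ 6.2.8), which the paper records a few lines after the present statement: if a normal subgroup $H$ of $G$ has index coprime to $p$, then $FG$ is serial if and only if $FH$ is. Since $K$ is normal of $p'$-index and $FK$ is serial by Fact \ref{p-nilp}, that one citation replaces your second and third paragraphs entirely, including the discussion of the $2$-cocycle (which, as written, also only asserts rather than proves that the twist is harmless). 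So the proof is repairable, and by a shorter route than the one you sketch, but as it stands the central step is asserted rather than established.
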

\begin{proof}
If $F$ is algebraically closed, it was proved by Morita \cite{Mor} (and later by Srinivasan \cite{Sri}). It
remains to apply Fact \ref{indep}.
\end{proof}

Furthermore, we conclude from Morita that, for a $p$-solvable group $G$, the Jacobson radical of $FG$ is principal
as a left and right ideal, and the multiplicity of principal projective modules in a given block of $FG$ is
constant. It follows from \cite[Thm. 2.3]{K-P13b} that both results hold over any field of characteristic $p$.

Note that each group ring $FG$ of a finite group is \emph{quasi-Frobenius}, i.e.\ each projective module is
injective and vice-versa, for instance $FG$ admits a self-duality. From this it easily follows that, when
proving seriality, it suffices to verify that this ring is \emph{right serial}, i.e.\ that each principal
projective module $e_iFG$ is uniserial.

Furthermore, by Fact \ref{indep}, when checking that the group ring of a given group is serial (say, by
calculating radical series in MAGMA \cite{magma}), we may assume that $F$ is a prime field. When enlarging $F$,
the idempotents may split, hence the block structure of $FG$ will change, but the result is still serial. Here
is a typical example.

Let $2.S_4^-$ denote the \emph{double covering} of $S_4$. This group is $3$-solvable with a cyclic Sylow 
$3$-subgroup, hence the group ring $FG$ is serial for any field $F$ of characteristic $3$. By calculating 
in GAP \cite{gap}, one recovers (see \cite{Kukh}) the above Kupisch structure of this group ring.

\begin{exam}\label{2-cov}
Let $G= 2.S_4^-$ be the double covering of $S_4$.

1) If $F$ is the prime field $\F_3$, then $FG= M_3(F)^2\oplus B\oplus M_2(W)$, where $B$ is the serial block

$$
\bsm F[x]& F[x]\\ xF[x]&F[x]\esm \Bigl/\bsm x^2F[x]& xF[x]\\ x^2F[x]&x^2F[x]\esm
$$

\noindent and $W$ is the factor $\F_9[y,\al]/(y^3)$ of the skew polynomial ring with the Frobenius automorphism
$\lam\mapsto \lam^3$.

2) If $F$ is the Galois field $\F_9$, then $FG= M_3(F)^2\oplus B\oplus M_2(B)$.
\end{exam}

\section{Brauer trees}

In this section we will recall the main tool to investigate seriality of group rings - the Brauer trees.
This graph is defined for each block $B$ with a cyclic defect group $D$, and is a tree whose one (exceptional)
vertex is distinguished. For a definition and main properties of this object the reader is referred to the
classical \cite{Feit-b}, or contemporary \cite{L-P}; we will mention just few instances.

To calculate the Brauer tree one should fix a $p$-modular system $(K, R, F, \eta)$
(see \cite[Def. 4.1.18]{L-P}), where $R$ is a complete discrete valuation domain with the quotient field $K$ of
characteristic zero, and $\eta: R\to F$ maps $R$ onto its residue field. We will consider this system to be
fixed and large enough to split in $K$ some rational polynomials (called \emph{splitting system} in \cite{L-P}).
The nature of this system will be of no importance for us.

Suppose that $G$ is a group with a cyclic Sylow $p$-subgroup $P$. Because the defect group $D$ of each block
$B$ is a subgroup of $P$, it is cyclic, hence each block obtains its Brauer tree. For instance $P$ itself is
the defect group of the \emph{principal block} $B_0$, i.e. of the block which contains the trivial character.

The number of edges of the Brauer tree, and the multiplicity of the exceptional vertex admit a (more or less)
straightforward calculation.

\begin{fact}\label{brauer}
Let $G$ be a finite group with a (nontrivial) cyclic Sylow $p$-subgroup $P$.

1) The number of edges $e_0$ of the principal block of $G$ equals the index $|N_G(P)/C_G(P)|$ of the
centralizer of $P$ in $G$ in its normalizer. Further, $e_0$ divides $p-1$, and the multiplicity of the
exceptional vertex (if any) equals $m_0= (|P|-1)/e_0$.

2) If $B$ is an arbitrary block with a defect group $D$, then the number of edges $e$ in its Brauer tree divides
$e_0$. Furthermore the multiplicity of the exceptional vertex equals $(|D|-1)/e$.
\end{fact}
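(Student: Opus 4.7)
The plan is to invoke the classical theory of blocks with cyclic defect group developed by Brauer and Dade, together with the standard $N/C$-theorem. Throughout, $D$ will denote a cyclic defect group of a block $B$, and $b_D$ will denote the Brauer correspondent of $B$ in $C_G(D)$. The central ingredient I will use is the formula $e(B)=[N_G(D,b_D):C_G(D)]$ for the inertial index, together with Dade's theorem on the shape of the Brauer tree, which forces $e(B)\cdot m(B)=|D|-1$ where $m(B)$ is the multiplicity of the exceptional vertex.

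For part 1, I would first observe that the principal block $B_0$ has defect group equal to the Sylow $p$-subgroup $P$, and its Brauer correspondent $b_0$ in $C_G(P)$ is the principal block of $C_G(P)$. Since the principal block is preserved by every group automorphism, it is in particular $N_G(P)$-stable, giving $N_G(P,b_0)=N_G(P)$. The inertial-index formula then yields $e_0=[N_G(P):C_G(P)]$. Because $P$ is cyclic (hence abelian), $P\seq C_G(P)$, so $N_G(P)/C_G(P)$ is a $p'$-group; by the $N/C$-theorem it embeds in $\operatorname{Aut}(P)\cong(\Z/p^n\Z)^*$, whose $p'$-part is cyclic of order $p-1$. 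Therefore $e_0\mid p-1$. The multiplicity statement $m_0=(|P|-1)/e_0$ is then the general Dade identity applied to $B_0$.

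For part 2, the same inertial-index formula gives $e=[N_G(D,b_D):C_G(D)]$ and the same Dade identity gives $m=(|D|-1)/e$. The divisibility $e\mid e_0$ is the step that needs real work. I would argue as follows: up to conjugacy we may assume $D\seq P$, and because $P$ is cyclic, $D$ is the unique subgroup of $P$ of its order, hence characteristic in $P$. Consequently $N_G(P)\seq N_G(D)$ and $C_G(P)\seq C_G(D)$, so restriction $\operatorname{Aut}(P)\to\operatorname{Aut}(D)$ induces a group homomorphism $N_G(P)/C_G(P)\to N_G(D)/C_G(D)$. On the $p'$-parts this map is injective, because the unique $(p-1)$-subgroup of $(\Z/p^n\Z)^*$ maps isomorphically onto the unique $(p-1)$-subgroup of $(\Z/p^m\Z)^*$. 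One then checks that, after replacing the Brauer correspondent by a suitable $N_G(D)$-conjugate, the inertial subgroup $N_G(D,b_D)/C_G(D)$ of $b_D$ is contained in the image of $N_G(P)/C_G(P)$ — this is the standard "reduction to the Sylow normalizer" argument used in the theory of cyclic blocks. Hence $e\mid e_0$.

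The main obstacle is precisely this last step, where compatibility of inertial subgroups along the chain $D\seq P$ has to be verified. Rather than grinding through the bookkeeping of block covering and Brauer correspondence, in the paper I would simply cite the corresponding results in \cite{Feit-b} or \cite{L-P}, since both the inertial-index formula and the divisibility $e\mid e_0$ are by now textbook consequences of Dade's theorem.
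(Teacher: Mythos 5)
Your proposal is correct in substance, but for part 2 it takes a different route from the paper, and the step you flag as the ``main obstacle'' deserves comment. The paper does not work with the defect group $D$ directly: it passes to the subgroup $Q$ of order $p$ in $D$, cites Benson for the fact that $e$ divides $|N_G(Q)/C_G(Q)|$ (in Dade's theory the inertial index of a cyclic block is most naturally computed at the order-$p$ subgroup, which is why this intermediate step is convenient), and then cites Sawabe for the divisibility $|N_G(Q)/C_G(Q)| \mid |N_G(P)/C_G(P)| = e_0$. You instead use the inertial index at $D$ itself, $e=|N_G(D,b_D):C_G(D)|$, and try to compare inertial subgroups along $D\subseteq P$; this works, but note that the containment you defer to a reference actually has a two-line proof that makes any conjugation of $b_D$ unnecessary. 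Since $P$ is cyclic and contains (a conjugate of) $D$, we have $D$ characteristic in $P$, hence $N_G(P)\le N_G(D)$; moreover $P\le C_G(D)$ and $P$ is a Sylow $p$-subgroup of the normal subgroup $C_G(D)$ of $N_G(D)$, so the Frattini argument gives $N_G(D)=C_G(D)\,N_{N_G(D)}(P)=C_G(D)\,N_G(P)$. Thus the natural map $N_G(P)/C_G(P)\to N_G(D)/C_G(D)$ is \emph{surjective} (and injective by your $\mathrm{Aut}(P)\to\mathrm{Aut}(D)$ observation), so $|N_G(D)/C_G(D)|$ divides $e_0$ and a fortiori $e=|N_G(D,b_D):C_G(D)|$ divides $e_0$, with no bookkeeping about inertial subgroups needed. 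The same Frattini argument applied with $Q$ in place of $D$ is essentially the content of the Sawabe reference the paper uses. The only residual point to be careful about in your version is the identification of the number of edges with the inertial index computed at $D$ rather than at the order-$p$ subgroup; these agree for cyclic blocks, but that agreement is itself a (standard) theorem, which is precisely why the paper's citation chain routes through $Q$. Part 1 of your argument matches the standard proof behind the paper's citations and is fine.
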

\begin{proof}
The first part follows from \cite[p. 173]{Blau} and \cite[Thm. 6.5.5]{Ben}. For the second claim one can argue
as follows. Let $Q$ be the subgroup of order $p$ in $D$. Then (see \cite[p. 212]{Ben}) $e$ divides
$|N_G(Q)/C_G(Q)|$. Now, by \cite[Prop. 1]{Saw}, the latter divides $|N_G(P)/C_G(P)|= e_0$.
\end{proof}

Note that $N_G(P)$ acts on $P$ by conjugation, and $e_0$ is the order of this action. Thus, if $H$ is a normal
subgroup of $G$ containing $P$, then $e_0(H)\leq e_0(G)$.

We say that a Brauer tree is a \emph{star}, if at most one vertex of this graph (its center) has a valency
exceeding $1$.  Here is the diagram of the star with 7 vertices, whose exceptional vertex is at its center.

$$
\vcenter{%
\def\labelstyle{\displaystyle}
\xymatrix@C=14pt@R=3pt{%
&*+={\circ}\ar@{-}[dd]&\\
*+={\circ}\ar@{-}[rd]&&*+={\circ}\ar@{-}[ld]\\
&*+={\bullet}\ar@{-}[dd]\ar@{-}[ld]\ar@{-}[rd]&\\
*+={\circ}&&*+={\circ}\\
&*+={\circ}&
}}
$$

\vspace{3mm}

Note that a \emph{line} (or \emph{open polygon}) with $e$ edges is a star iff $e=1$, or $e=2$ with the
exceptional vertex in the center. For instance the following line  with 3 edges is not a star.

$$
\vcenter{%
\def\labelstyle{\displaystyle}
\xymatrix@C=24pt@R=4pt{%
*+={\circ}\ar@{-}[r]&*+={\circ}\ar@{-}[r]&*+={\circ}\ar@{-}[r]&*+={\circ}
}}
$$

\vspace{3mm}

An irreducible character $\chi$ is said to be \emph{real}, if it is either real-valued, or $\chi$ is an
exceptional character taking real values on $p$-regular classes. By \cite[p. 3]{H-L} real characters of a
given block form a \emph{real stem} of its Brauer tree, which is a line.

For instance, let $G$ be the special unitary group $\SU_3(4^2)$. If $p=13$, then each Sylow $p$-subgroup is
cyclic. Further, from the character table we find that the Brauer tree of the principal block of $G$ is the
following line, where all non-exceptional characters are real valued. But each exceptional character
$\chi_{19\text{-22}}$ takes non-real values on each class of elements of order $13$.

$$
\vcenter{%
\xymatrix@C=50pt@R=10pt{%
*+={\circ}\ar@{-}[r]\ar@{}+<0pt,-10pt>*{_{\chi_1}}\ar@{}+<0pt,10pt>*{_1}&
*+={\circ}\ar@{-}[r]\ar@{}+<0pt,-10pt>*{_{\chi_2}}\ar@{}+<0pt,10pt>*{_{12}}
&*+={\bullet}\ar@{-}[r]\ar@{}+<0pt,-10pt>*{_{\chi_{19\text{-}22}}}\ar@{}+<0pt,10pt>*{_{75}}&
*+={\circ}\ar@{}+<0pt,-10pt>*{_{\chi_{13}}}\ar@{}+<0pt,10pt>*{_{64}}&
}}
$$

\vspace{3mm}

Thus the following is an instrumental criterion of seriality.

\begin{fact}(see \cite[Cor. 7.2.2]{Feit-b})\label{star}
Let $G$ be a finite group with a nontrivial cyclic Sylow $p$-subgroup and let $F$ be a field of characteristic
$p$. Then the ring $FG$ is serial if and only if each block of $G$ is a star whose exceptional vertex (if any)
is located at its center.
\end{fact}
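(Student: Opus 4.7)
The plan is to reduce the seriality of $FG$ to the seriality of each of its blocks, and, in each block with a nontrivial cyclic defect group, to read off uniseriality of principal projectives from the shape of the Brauer tree via Green's walk.

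First I would use the block decomposition $FG=\bigoplus_i B_i$, so that $FG$ is serial iff every $B_i$ is serial as a ring. A block of defect zero is simple artinian, hence trivially serial (and its Brauer tree is a single edge with no exceptional vertex, so a star), making the criterion vacuous there. Because each Sylow $p$-subgroup of $G$ is cyclic, every nontrivial defect group is cyclic, and so each such block $B$ has a well-defined Brauer tree $T$.

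Next I would invoke the classical Green description of the indecomposable projective modules over $B$ (see \cite[Ch.\ VII]{Feit-b}): each edge $e$ of $T$ corresponds to a simple module $S_e$ whose projective cover $P_e$ has top and socle both equal to $S_e$, and the heart $\Jac(P_e)/\Soc(P_e)$ decomposes as $U_1\oplus U_2$, where each $U_i$ is uniserial of length $m(v_i)d(v_i)-1$, obtained by walking around the endpoint $v_i$ of $e$; here $d(v_i)$ is the valence of $v_i$, and $m(v_i)=1$ for a non-exceptional vertex and $m(v_i)=m$ for the exceptional one. Consequently $P_e$ is uniserial if and only if at least one $U_i$ vanishes, i.e.\ at least one endpoint of $e$ is a non-exceptional leaf. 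The ring $B$ is serial iff this holds for every edge $e$.

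The core combinatorial step is then to translate the condition ``every edge of $T$ has a non-exceptional leaf endpoint'' into ``$T$ is a star whose exceptional vertex, if any, is at the centre''. For one direction, if $T$ had two vertices of valence $\geq 2$, the unique path in $T$ between them would contain an edge with both endpoints of valence $\geq 2$, violating the condition; so at most one vertex has valence $\geq 2$ and $T$ is a star (a single edge being trivially a star). If, moreover, the exceptional vertex $v$ with $m\geq 2$ were a leaf different from the centre $c$, then on the edge $\{c,v\}$ both quantities $m(c)d(c)$ and $m(v)d(v)=m$ would be $\geq 2$, so neither $U_i$ vanishes, a contradiction. The converse is immediate, since on a star with the exceptional vertex (if any) at the centre every edge joins a non-exceptional leaf to the centre. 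I expect the only real obstacle to be citing or sketching Green's walk cleanly enough to extract the two lengths $m(v_i)d(v_i)-1$; once that is in place, the tree argument above and the reduction to blocks are routine.
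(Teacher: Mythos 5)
The paper offers no proof of this Fact at all --- it is quoted directly from Feit \cite[Cor.\ 7.2.2]{Feit-b} --- so there is nothing internal to compare against; your reconstruction is the standard argument underlying Feit's corollary and it is essentially correct. The reduction to blocks, the Green-walk description of $P_e$ (simple top and socle $S_e$, heart $U_1\oplus U_2$ with $U_i$ uniserial of length $m(v_i)d(v_i)-1$), the observation that $P_e$ is uniserial iff some $U_i$ vanishes iff some endpoint of $e$ is a non-exceptional leaf, and the translation of ``every edge has a non-exceptional leaf endpoint'' into ``$T$ is a star with the exceptional vertex (if any) at the centre'' are all right, including the convention that a single edge is a star whose centre may be declared to be the exceptional vertex. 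Two small points would need to be added to make this a complete proof of the Fact \emph{as stated}. First, the edge--simple bijection and the Green-walk structure of the projectives are established over a splitting $p$-modular system, whereas the Fact asserts the criterion for an arbitrary field $F$ of characteristic $p$; you should open by invoking Fact \ref{indep} (Eisenbud--Griffith) to replace $F$ by a sufficiently large field, which is exactly the convention the paper fixes at the start of Section 3 when it introduces the $p$-modular system. Second, your criterion checks only that the right principal projectives are uniserial; this suffices because $FG$ (hence each block) is quasi-Frobenius and right seriality implies seriality, a point the paper records in Section 2 and which you should cite rather than leave implicit. With those two sentences added, your argument is a clean self-contained substitute for the citation.
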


For instance, let $G= A_5$ and $p=5$, hence each Sylow $5$-subgroup of $G$ is cyclic. Then the Brauer tree of
its principal block is the following:

$$
\vcenter{%
\xymatrix@C=30pt@R=4pt{%
*+={\circ}\ar@{}+<0pt,-8pt>*{_{\chi_1}}\ar@{-}[r]^{\phi_1}&
*+={\circ}\ar@{}+<2pt,-8pt>*{_{\chi_4}}\ar@{-}[r]^{\phi_3}&
*+={\bullet}\ar@{}+<4pt,-8pt>*{_{\chi_3,\, \chi'_3}}
}}
$$

\vspace{3mm}

Here the characters are indexed by degrees, for instance $\chi_1$ denotes the trivial character. Further,
the characters $\chi_3, \chi'_3$ form the exceptional vertex of multiplicity $2$. Because this vertex is
not in the center, the group ring of $A_5$ over any field of characteristic 5 is not serial.

The same results can be achieved (but with more efforts) by splitting idempotents in $FG$ for $F= \F_5$. Namely
(see \cite[p. 259]{B-O}) the group ring $F A_5$ decomposes as $M_5(F)\oplus C$, where

$$
C= \bsm Q&Q&Q&X\\Q&Q&Q&X\\Q&Q&Q&X\\Y&Y&Y&T\esm\,
$$

\noindent is a block of dimension $35$, which is the blow-up of its $(3,4)$-minor $\bsm Q&X\\Y&T\esm$,
where the diagonal ring $Q$ has dimension $3$, $T$ is 2-dimensional, and bimodules $X, Y$ are 1-dimensional.
Further, (see \cite{K-P13b}) the latter ring is isomorphic to the following factor-ring:

$$
R= \bsm F[x]& xF[x]\\ xF[x]&F[x]\esm \Bigl/\bsm x^3F[x]& x^2F[x]\\ x^2F[x]&x^2F[x]\esm\,.
$$

Taking $r= \ov x\in e_1 Re_1$ and $s= \ov x\in e_1Re_2$, it is easily seen that the module $e_1R$ is not
uniserial.

Clearly the above criterion (Fact \ref{star}) is equivalent to the following: each irreducible $p$-modular
character lifts \emph{uniquely} to an irreducible ordinary character via the $p$-modular system. For instance,
for $A_5$ and $p=5$, each $p$-modular character can be lifted, but the uniqueness fails: the restrictions of
ordinary characters $\chi_3, \chi'_3$ to $5'$-classes is the Brauer character $\phi_3$. This uniqueness
requirement disappeared from the characterization of serial group rings in Janusz \cite[Cor. 7.5]{Jan}.

Note that seriality is a Morita invariant property. Furthermore, each factor ring of a serial ring is
serial. For instance, if $H$ is a normal subgroup of a group $G$, then the seriality of $FG$ implies that
the ring $F (G/H)$ is serial.

We say that a block $B$ of $FG$ \emph{covers} a block $b$ of $FH$, if there are irreducible characters
$\chi\in B$ and $\xi\in b$ such that $\xi$ is a component of $\chi$ restricted to $H$. For instance, the
principal block of $G$ covers the principal block of $H$. We will put to use the following fact.

\begin{fact}(see \cite[Cor. 6.2.8]{Feit-b})\label{feit}
Let $F$ be a field of characteristic $p$ dividing the order of $G$ and let $H$ be a normal subgroup of $G$.

1) Let $B$ be a block of $FG$ which covers a block $b$ of $FH$. Assume that $H$ contains a defect group of $B$.
Then $B$ is a serial ring if and only if $b$ is a serial ring.

2) For instance, if the index of $H$ in $G$ is coprime to $p$, then the ring $FG$ is serial if and only if the
same holds true for $FH$.
\end{fact}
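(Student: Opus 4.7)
The plan is to derive Part 2 from Part 1 and to prove Part 1 via a Clifford-theoretic reduction built on the Fong--Reynolds block correspondence. For Part 2, when $[G:H]$ is coprime to $p$ every Sylow $p$-subgroup of $G$ lies in $H$, so every defect group of every block of $FG$ is (up to conjugacy) a subgroup of $H$. Since every block of $FG$ covers some block of $FH$ and vice versa (the reverse direction follows by summing $G/H$-conjugates of a central idempotent of $FH$), Part 1 applies block by block, and seriality of $FG$ is equivalent to seriality of $FH$ because both rings are serial iff each of their blocks is.

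For Part 1, I would first reduce to the $G$-invariant case. Let $T = T_G(b)$ be the stabilizer of $b$ in $G$. The Fong--Reynolds theorem provides a bijection $\tilde{B}\mapsto \tilde{B}^G$ between blocks of $FT$ covering $b$ and blocks of $FG$ covering $b$, and corresponding blocks are Morita equivalent and share a defect group. The block $\tilde{B}$ of $FT$ that corresponds to $B$ has the same defect group $D\subseteq H$, and since seriality is Morita invariant I may replace $(G,B)$ by $(T,\tilde{B})$ and assume $b$ is $G$-invariant. In that setting, standard defect group theory identifies $D/(D\cap H)$ with a Sylow $p$-subgroup of $G/H$; but $D\subseteq H$ forces this quotient to be trivial, so $G/H$ is a $p'$-group.

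With $G/H$ a $p'$-group acting trivially on $b$, the block $B$ has the structure of a crossed product of $b$ by $G/H$, and its Brauer tree is obtained from that of $b$ via a $p'$-index transformation that matches ordinary and modular characters through $G/H$-orbits and pairs up exceptional vertices. The shape ``star centered at the exceptional vertex'' is preserved by this transformation, so Fact \ref{star} yields $B$ serial iff $b$ serial. The main obstacle is precisely this last step: verifying in detail that the $p'$-extension preserves the star-with-central-exceptional-vertex shape of the Brauer tree. The cleanest route I would take is through Dade's theory of blocks with cyclic defect over a normal subgroup, which provides an explicit matching of the Brauer trees of $B$ and $b$ together with their exceptional multiplicities, making the seriality transfer automatic without any hand calculation of characters.
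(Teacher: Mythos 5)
The paper offers no proof of this statement---it is quoted directly from Feit \cite[Cor.~6.2.8]{Feit-b}---so your attempt can only be judged on its own terms. Your derivation of Part~2 from Part~1 is correct, and the Fong--Reynolds reduction to a $G$-invariant block $b$ is the standard first move. The genuine error comes next: you claim that, for $G$-invariant $b$, ``standard defect group theory'' identifies $D/(D\cap H)$ with a Sylow $p$-subgroup of $G/H$, so that $D\subseteq H$ forces $G/H$ to be a $p'$-group. This is false. Take $H=1$ and let $B$ be any block of defect zero of a group of order divisible by $p$ (for instance the block of $F A_5$ containing the character of degree $5$, with $p=5$): then $b$ is trivially $G$-invariant and $D=1\subseteq H$, yet $G/H=G$ is not a $p'$-group. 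A non-degenerate instance is $G=C_5\times A_5$, $H=C_5$, $p=5$, with $B=b\otimes B'$ for $B'$ that defect-zero block: here $b$ is $G$-invariant, the defect group of $B$ is $C_5\times 1\subseteq H$, and $G/H\cong A_5$. What is actually true (Kn\"orr) is only that some defect group $D$ of $B$ meets $H$ in a defect group of $b$, and that $DH=G$ when $G/H$ is a $p$-group; neither gives your reduction, so the case $p\mid [G:H]$ with $D\subseteq H$ is simply not covered by your argument.

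Even in the situation where $G/H$ really is a $p'$-group, the final step---that the ``star with central exceptional vertex'' shape passes between the Brauer trees of $b$ and $B$---is precisely the content of the corollary being proved and cannot be asserted as a black box. The tree of $B$ is obtained from that of $b$ by a folding/unfolding which in general changes both the number of edges and the exceptional multiplicity: for the principal $5$-blocks of $A_5\trianglelefteq S_5$ (an instance of Part~2 of the Fact) one passes from a line with two edges and multiplicity $2$ to a line with four edges and multiplicity $1$. So one must either describe this operation explicitly (as in Feit \cite{Feit} or Dade's theorem on blocks with cyclic defect groups) and check that stars centered at the exceptional vertex correspond exactly to stars centered at the exceptional vertex, or fall back on the citation the paper itself uses. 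As written, your proposal replaces the one nontrivial assertion by a reference and inserts a false reduction in front of it.
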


Note (see \cite[L. 4.4.12]{Feit-b}) that the annihilator of the principal block of any group $G$ is its
largest normal $p'$-subgroup $O_{p'}(G)$. Thus, when investigating seriality of the principal block, we may
factor out this subgroup.

We will also need the following result on groups with cyclic Sylow $p$-subgroups, which deserves a better
publicity.

\begin{fact}(see \cite[L. 5.2]{Blau} and \cite[L. 6.1]{Naer})
Let $G$ be a non-$p$-solvable group with a nontrivial cyclic Sylow $p$-subgroup $P$. Then there exists the
least normal subgroup $K$ of $G$ properly containing $O_{p'}(G)$. Furthermore, $K$ contains $P$ and the factor
group $H= K/O_{p'}(G)$ is simple non-abelian.
\end{fact}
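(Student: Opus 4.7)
My plan is to set $N = O_{p'}(G)$ and pass to $\bar G = G/N$. Standard properties give $O_{p'}(\bar G) = 1$, and since $N$ is $p$-solvable, $\bar G$ inherits non-$p$-solvability together with a cyclic Sylow $p$-subgroup $\bar P$. The desired $K$ will then be the preimage in $G$ of the least nontrivial normal subgroup of $\bar G$, so I need to establish three things about $\bar G$: it has a unique minimal normal subgroup $\bar K$, this $\bar K$ is simple non-abelian, and $\bar P \subseteq \bar K$.

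The main tool I would use is the generalized Fitting subgroup $F^*(\bar G) = O_p(\bar G) \cdot E(\bar G)$, together with the relation $C_{\bar G}(F^*(\bar G)) \subseteq F^*(\bar G)$. Distinct components of $E(\bar G)$ commute pairwise, so their Sylow $p$-subgroups generate an abelian subgroup of $\bar P$; cyclicity of $\bar P$ forces at most one component to have order divisible by $p$, while all other components together form a normal $p'$-subgroup, which must be trivial because $O_{p'}(\bar G) = 1$. Thus either $E(\bar G) = 1$, or $E(\bar G) = L$ for a single quasisimple component $L$ with cyclic Sylow $p$-subgroup. I would rule out $E(\bar G) = 1$ using non-$p$-solvability: otherwise $F^*(\bar G) = O_p(\bar G)$ would be cyclic and $\bar G/O_p(\bar G)$ would embed into the solvable group $\mathrm{Aut}(O_p(\bar G))$, forcing $\bar G$ to be solvable. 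A parallel $p'$-argument shows that the $p'$-part of $Z(L)$ lies in $O_{p'}(\bar G) = 1$, so $Z(L)$ is a cyclic $p$-subgroup of $\bar P$.

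Next, I would show that both $O_p(\bar G)$ and $Z(L)$ must be trivial, so that $L$ itself is simple non-abelian and coincides with $F^*(\bar G)$. The argument is that any nontrivial normal $p$-subgroup of $\bar G$ sits inside the cyclic $\bar P$ and produces an abelian minimal normal subgroup centralized by the layer $L$; analyzing the resulting central extension and exploiting non-$p$-solvability (via Schur--Zassenhaus applied to the $p'$-complement of the pullback, which would be forced into $O_{p'}(\bar G) = 1$) yields $O_p(\bar G) = Z(L) = 1$. Once $F^*(\bar G) = L$ is simple, any nontrivial normal subgroup of $\bar G$ either contains $L$ (by the self-centralizing property) or lies in $C_{\bar G}(L) = 1$; hence $L$ is the unique minimal normal subgroup $\bar K$ of $\bar G$. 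Uniqueness can also be seen directly: two distinct minimal normal subgroups, both simple non-abelian with nontrivial cyclic Sylow $p$-subgroups, would commute and their Sylow $p$-subgroups would intersect trivially while each is a nontrivial subgroup of the cyclic $\bar P$, a contradiction.

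Finally, to see $\bar P \subseteq \bar K$, I would use that $C_{\bar G}(\bar K) = 1$ embeds $\bar G$ into $\mathrm{Aut}(\bar K)$ with $\bar K \cong \mathrm{Inn}(\bar K)$. If $\bar P \not\subseteq \bar K$, a generator of the cyclic $\bar P$ commutes with all its own powers, in particular with the unique subgroup of $\bar P$ of order $|\bar P \cap \bar K|$, which is a full Sylow $p$-subgroup of $\bar K$. Its image in $\mathrm{Out}(\bar K)$ is then a nontrivial $p$-element centralizing a Sylow $p$-subgroup of $\bar K$. The hardest step---and the main obstacle---is ruling out such an outer $p$-automorphism. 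This requires the classification of finite simple groups with cyclic Sylow $p$-subgroup together with a case-by-case inspection of their outer automorphism groups, which is precisely the content of the cited Lemma 5.2 of Blau and Lemma 6.1 of Naer.
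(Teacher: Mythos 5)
The paper offers no proof of this Fact at all --- it is quoted verbatim from Blau and Naer --- so there is nothing internal to compare against; your outline via the generalized Fitting subgroup is the standard route to it, and most of the skeleton is sound: the passage to $\bar G=G/O_{p'}(G)$, the elimination of all but one component $L$, the case $E(\bar G)=1$ (where $C_{\bar G}(F^*(\bar G))\subseteq F^*(\bar G)$ forces $\bar G$ metabelian), the self-centralizing property giving that every nontrivial normal subgroup of $\bar G$ contains $L$, and your identification of the inclusion $\bar P\subseteq\bar K$ as the genuinely classification-dependent step are all correct.

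There are two places to flag. The concrete flaw is your argument that $O_p(\bar G)=Z(L)=1$: you invoke ``Schur--Zassenhaus applied to the $p'$-complement of the pullback'', but Schur--Zassenhaus requires coprime orders, and here the relevant extension is a \emph{central $p$-extension} of a group of order divisible by $p$, so it simply does not apply. The correct tool is the vanishing of the $p$-part of the Schur multiplier of any group with cyclic Sylow $p$-subgroup (restriction of $H^2(-,\mathbb{C}^*)$ to a Sylow $p$-subgroup is injective on the $p$-part, and $H^2$ of a cyclic group with coefficients in the divisible group $\mathbb{C}^*$ is trivial). Since $L$ is a perfect central extension of the simple group $L/Z(L)$, which again has cyclic Sylow $p$-subgroup, $Z(L)$ is a quotient of that multiplier and so has trivial $p$-part; combined with your observation that the $p'$-part of $Z(L)$ lies in $O_{p'}(\bar G)=1$, this gives $Z(L)=1$. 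Then $O_p(\bar G)\neq 1$ is impossible, because the unique subgroup of order $p$ of the cyclic group $\bar P$ would lie both in $O_p(\bar G)\subseteq C_{\bar G}(L)$ and in the Sylow $p$-subgroup $\bar P\cap L$ of $L$, hence in $Z(L)=1$. The second point is not a flaw but a caveat: your last step reduces $\bar P\subseteq\bar K$ to the nonexistence of a nontrivial $p$-element of $\mathrm{Out}(\bar K)$ acting trivially on a Sylow $p$-subgroup of $\bar K$, which is a theorem of Gross type proved only via the classification of finite simple groups; deferring it to the cited lemmas is reasonable and consistent with the paper's own decision to quote the entire Fact rather than prove it.
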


Note that this result does not extend to $p$-solvable groups. Namely, let $G$ be the dihedral group $D_{18}$ and
$p=3$. Then $O_{p'}(G)=1$ and $K= C_3$ does not contain $P= C_9$.

Thus, each non-$p$-solvable group $G$ with a cyclic Sylow $p$-subgroup admits the following 4-term normal
series, similar to $p$-solvable groups: $0\subset O_{p'}\subset K\subset G$. Further, the index of $K$ in
$G$ is coprime to $p$, hence, by Fact \ref{feit}, when investigating seriality, we may assume that $K= G$.
Also, the group ring of the simple group $H= K/O_{p'}$ must be serial and, if so, then the  principal block of
$FG$ is serial.

\section{Simple groups with serial group rings}\label{S-simple}

In this section we will start proving Theorem \ref{t-main}, i.e. classifying finite simple groups $H$ with
serial group rings. Because each abelian group is $p$-solvable, its group ring is serial, hence assume
that $H$ is non-abelian.

Most cases has been already considered. We will just briefly comment on some instances.

1) The case of alternating groups was considered in \cite{K-P14} using Scope's theorem \cite{Sco} on Morita
equivalence of blocks of symmetric groups.

2) The analysis of the $\PSL$-series was completed in \cite{Kukh} using Burkhard's paper \cite{Burk76}.

3) The case of sporadic simple groups was settled in \cite{K-P15a} by browsing known Brauer trees from
Hiss--Lux \cite{H-L}. In fact, few facts from Blau (like the parity of centralizers - see \cite[Cor. 1]{Blau})
simplifies matters greatly, and the cross-naught method of labeling Brauer trees (see \cite[Def. 2.1.12]{H-L})
was used for large groups, when the Brauer trees are not available.

As a kind of peculiarity note that the principal block of the Mathieu group $M_{23}$ for $p=5$ is serial,
because (see \cite[p. 104]{H-L}) its Brauer tree is a star with 4 edges without an exceptional vertex. However,
there is a nonprincipal block whose Brauer tree is a line with 2 edges, whose exceptional vertex has multiplicity
2 and is attached to the end, hence this block is not serial.

4) The analysis of Suzuki groups in \cite{K-P15a} is based on Burkhardt \cite{Burk79}. For instance, when
$5\mid q+r+1$, then each Sylow $5$-subgroup is cyclic, and the Brauer tree of the principal block has the
following shape:

$$
\vcenter{%
\xymatrix@C=16pt@R=14pt{%
&*+={\circ}\ar@{-}[d]&\\
*+={\circ}\ar@{-}[r]\ar@{}+<0pt,-8pt>*{_{\chi_1}}&*+={\circ}\ar@{-}[d]\ar@{-}[r]&*+={\bullet}\\
&*+={\circ}&
}}
$$

\vspace{1mm}

This block is serial iff the multiplicity of the exceptional vertex is 1, hence if $25$ does not divide
$q+r+1$. For instance, this is the case, when $n=3$, i.e. $q= 128$.

A similar situation occurs (see \cite{K-P17}), when $G$ is the Ree group ${}^2G_2(q^2)$. Namely, if $p$ divides
$q^2+ \sq 3\, q+ 1$, then, according to \cite[Thm. 4.3]{Hiss-91}, each Sylow $p$-subgroup is cyclic and
the Brauer tree of the principal block of $G$ is a star with 6 edges, whose exceptional vertex is on the
boundary. Because the multiplicity equals $(|P|-1)/6$, if the ring $FG$ is serial, we conclude that $p=7$
and $|P|=7$. For instance, this is the case when $q^2= 3^{13}$.

The remaining cases of exceptional Lie groups are ruled out in \cite{K-P17} using known information on
Brauer trees, for instance see \cite{H-L98} and \cite{HLM}.

5) The symplectic, unitary and orthogonal groups defined over fields with odd number of elements were
investigated in \cite{K-P16}. The main tool was an old result by Fong and Srinivasan \cite{F-S}. It says
that in this case (and when $p\neq 2$ does not divide $q$) the Brauer trees of all cyclic blocks of certain
(not necessarily simple) groups in classical series are lines.

Thus, to complete the proof of Theorem \ref{t-main}, it remains to consider the case when $G$ is a symplectic,
orthogonal or unitary simple group defined over a field with even number of elements. The proof proceeds
similarly to \cite{K-P16}, but first we will need a substitute for Fong and Srinivasan result.

\begin{fact}\label{polyg}
Let $G$ be one of groups $\Sp_{2m}(q)$, $\GO^{\pm}_{2m}(q)$, where $q$ is even. Then the Brauer tree of each
cyclic block of $G$ is a line.
\end{fact}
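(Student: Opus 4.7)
The plan is to adapt the Fong--Srinivasan approach \cite{F-S}, which handled odd $q$, to the even $q$ case. Since the results are only applied in the context of seriality we have $p>2$, while $q$ is a power of $2$, so $p$ is automatically coprime to $q$ and we are in the non-defining characteristic setting; the general theory of $p$-blocks of finite reductive groups in cross-characteristic applies.

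First I would classify the cyclic $p$-blocks of $\Sp_{2m}(q)$ and $\GO^{\pm}_{2m}(q)$. The Broué--Michel / Fong--Srinivasan description parameterizes unipotent characters by symbols, and gathers them into unipotent $p$-blocks according to their $e$-core, where $e$ is the multiplicative order of $q$ modulo $p$. A block has cyclic defect group precisely when its $e$-weight equals $1$, i.e.\ its symbol differs from its $e$-core by a single $e$-hook, so that a single cyclic $p$-factor of $q^e-1$ (or of an analogous cyclotomic expression) supplies the defect. The non-unipotent blocks are handled by the standard Jordan decomposition of characters, which reduces each of them to a unipotent block of a smaller classical group of the same type.

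For each such cyclic block $B$ I would then compute its Brauer tree via the Brauer correspondent in $N_G(D)$. The relative Weyl group $N_G(D,B)/C_G(D)$ is cyclic of order dividing $p-1$, and the Brauer correspondent is Morita-equivalent to a rank-one cyclotomic Hecke algebra attached to an $e$-cuspidal pair $(L,\lambda)$. By Rouquier's derived equivalence (or by computing decomposition numbers directly, as in \cite{F-S}), $B$ is derived equivalent to its Brauer correspondent, and hence its Brauer tree is determined by the irreducible characters in the $e$-Harish-Chandra series above $(L,\lambda)$; these arrange themselves in a line, indexed by the one-dimensional characters of the cyclic relative Weyl group, with the exceptional vertex attached at one end.

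The main obstacle is the passage from odd to even $q$: in characteristic $2$ the centralizers of semisimple $p$-elements, the parameterization of unipotent characters by symbols, and the action of field/graph automorphisms all require some adjustment. In particular, one must separately address the exceptional isomorphism $\Sp_{2m}(q)\cong \SO_{2m+1}(q)$ in characteristic $2$, which collapses the odd- and even-dimensional orthogonal pictures, as well as a handful of low-rank coincidences. I expect, however, that no genuinely new Brauer-tree shapes arise: as in odd characteristic, all cyclic blocks of these groups are ``Fong--Srinivasan blocks'' whose trees are lines.
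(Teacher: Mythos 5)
Your proposal is a research program rather than a proof, and it has two concrete problems. First, the step ``$B$ is derived equivalent to its Brauer correspondent, and hence its Brauer tree is determined'' does not work: the Brauer tree is \emph{not} a derived invariant. By Rickard's theorem, two blocks with cyclic defect groups are derived equivalent if and only if they have the same number of edges and the same exceptional multiplicity, so a line with $e$ edges is derived equivalent to a star with $e$ edges. The Brauer correspondent in $N_G(D)$ always has a star-shaped tree (normal cyclic defect group), so a derived equivalence with it can never distinguish a line from a star --- and that distinction is exactly what the statement asserts and what the rest of the paper needs. The actual content would have to come from the parenthetical ``computing decomposition numbers directly, as in Fong--Srinivasan,'' but that is precisely the part you do not carry out: for even $q$ you acknowledge that the parameterization of blocks, the centralizers of semisimple elements, and the low-rank coincidences ``require some adjustment'' and then conclude with ``I expect no genuinely new shapes arise.'' That expectation is the theorem; asserting it is not proving it.

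The paper avoids all of this machinery with a two-line argument special to even characteristic: by Gow's theorem, every element of $\Sp_{2m}(q)$ and $\GO^{\pm}_{2m}(q)$ with $q$ even is a product of two involutions, hence conjugate to its inverse, hence every irreducible character is real-valued. Since the real characters of a block form the real stem of its Brauer tree, which is always a line, the whole tree is a line. I would recommend looking for a reality or rationality statement of this kind before attempting to redo the Fong--Srinivasan classification in even characteristic; the paper itself remarks (in the unitary case) how hard it is to find proper references for the latter route.
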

\begin{proof}
By \cite[Thm.]{Gow}, each element of $G$ is a product of two involutions. From this it follows easily that
each element of $G$ is conjugated to its inverse, hence each character of $G$ is real-valued. Thus the Brauer
tree of each cyclic block coincides with its real stem, hence is a line.
\end{proof}

We copy from Stather \cite[p. 549]{Stat} the table (see Table \ref{t-1}) describing the sizes of Sylow
$p$-subgroups of certain classical groups, when $p>2$ does not divide $q$. Here $|H|_p$ denotes the order of
a Sylow $p$-subgroup of a group $H$, and $\lf k/l\rf$ is the integer part of the fraction $k/l$.

\begin{table}[b]
\caption{Orders of Sylow $p$-subgroups of classical groups}\label{t-1}
\begin{tabular}{|p{1.6cm}|p{4.0cm}|p{2.8cm}|p{1.0cm}|}
\hline
Group & \hspace{0mm} Condition on $d$& Order of a Sylow $p$-subgroup & Sylow type\\
\hline
\multirow{2}{*}{$\Sp_{2m}(q)$} & $d$ even & $|\GL_{2m}(q)|_p$ & A\\
\cline{2-4}
&$d$ odd & $|\GL_m(q)|_p$ & B\\
\hline
\multirow{3}{*}{$\GO^+_{2m}(q)$} & $d$ odd & $|\GL_m(q)|_p$ & B\\
\cline{2-4}
&$d$ even and $\lf d/2m\rf$ odd & $|\GL_{2m-2}(q)|_p$ & A\\
\cline{2-4}
& otherwise & $|\GL_{2m}(q)|_p$ & A\\
\hline
\multirow{3}{*}{$\GO^-_{2m}(q)$} & $d$ odd & $|\GL_{m-1}(q)|_p$ & B\\
\cline{2-4}
&$d$ even and $\lf d/2m\rf$ even & $|\GL_{2m-2}(q)|_p$ & A\\
\cline{2-4}
& otherwise & $|\GL_{2m}(q)|_p$ & A\\
\hline
\multirow{2}{*}{$\GU_n(q^2)$} & $d \equiv 2\pmod 4$& $|\GL_n(q^2)|_p$ & B\\
\cline{2-4}
& otherwise & $|\GL_{\lf n/2\rf}(q^2)|_p$ & A\\
\hline
\end{tabular}
\end{table}

We will consider each case in turn. Since none of simple classical groups is 2-nilpotent, we may assume that
$p> 2$. Furthermore (say, by \cite[Prop. 5.1]{S-W}), in the defining characteristic each Sylow $p$-subgroup
is not cyclic, hence we will also assume that \emph{$p$ does not divide $q$}.

Recall also a relevant information on Sylow subgroups of general linear groups. Suppose that $d$ is the order
of $q$ modulo $p$. Then $G= \GL_d(q)$ contains a copy of the multiplicative group $\F_{q^d}^*$ of the Galois
field, the so-called \emph{Singer cycle}, and a Sylow $p$-subgroup of $G$ can be chosen within this cycle.
Then the centralizer of $P$ in $G$ coincides with the cycle, and the normalizer of $P$ is generated over the
centralizer by an element $y$, which acts by conjugation on a generator $\al$ of $P$ as $\al^y= \al^q$,
in particular the index $|N_G(P)/C_G(P)|$ equals $d$.

Note also that a Sylow $p$-subgroup of $\GL_m(q)$ is cyclic and nontrivial iff $m/2< d\leq m$. In this case
$P$ can be chosen within the copy of $\GL_d(q)$ embedded in the left upper corner.

The idea of forthcoming proofs is the following. If the ring $FG$ is serial, then $d$ cannot be too small
comparing with the size of matrices, otherwise $P$ is not cyclic. Further, $d$ cannot be too large, because
this would imply $e_0\geq 3$. This effectively restricts the size of matrices for which serial rings occur,
and the remaining cases are just few.

\section{Symplectic groups}\label{symp}

Because $q$ is even, one may define symplectic groups using a non-singular symmetric bilinear form $f$ on a
$2m$-dimensional vector space $V$ over the Galois field $\F_q$. This form is unique up to the choice of a
basis in $V$, and is given by a matrix $W$ such that $W= W^t$, where $t$ means transpose.

A \emph{symplectic group}, $\Sp_{2m}(q)$, consists of invertible matrices $A$ of order $2m$ which preserve $f$,
i.e. $A W A^t= W$. This group has the following order:

$$
|\Sp_{2m}(q)|= q^{m^2}\cdot (q^2-1)\cdot (q^4-1)\cdot\ldots \cdot (q^{2m}-1)\,.
$$

For instance, one could take \rule{0pt}{6mm}$W= \bsm 0&I_m\\I_m&0\esm$. Then the rule
$A\mapsto \bsm A&0\\ 0& A^{-t}\esm$ defines an embedding from $\GL_m(q)$ into $\Sp_{2m}(q)$.

Since $q$ is even, $G= \Sp_{2m}(q)$ coincides with its projective variant, and is simple with few exceptions:
$\PSp_2(2)\cong S_3$ and $\PSp_2(4)\cong S_6$ (see Atlas \cite[p.~x]{atlas}). Further, $\PSp_2(q)$ is isomorphic to
$\PSL_2(q)$. This case has been already analyzed in \cite{Kukh}, hence seriality occurs just in cases 2) and
3) of Theorem \ref{t-main}. Thus we may assume that $m> 1$, and we prove that no serial group rings occur
in the case.

\begin{prop}\label{symp}
Let $F$ be a field of characteristic $p$ dividing the order of a (simple) symplectic group $G= \Sp_{2m}(q)$,
where $m> 1$ and $q$ is even. Then the group ring $FG$ is not serial.
\end{prop}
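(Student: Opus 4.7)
The plan is to derive a contradiction from assuming that $FG$ is serial, using only the principal block and the fact that its Brauer tree is forced to be a line with too many edges.

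First, suppose $FG$ is serial. By Fact \ref{hig}, a Sylow $p$-subgroup $P$ of $G$ is (nontrivial and) cyclic. By Fact \ref{polyg} the Brauer tree of every cyclic block of $G$ is a line; in particular this applies to the principal block $B_0$. By Fact \ref{star} this line must be a star whose exceptional vertex is located at its center. A line with $e$ edges is a star only when $e=1$, or $e=2$ with the exceptional vertex at the middle; hence the number of edges $e_0$ of $B_0$ satisfies $e_0\le 2$.

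Now I would write $d$ for the multiplicative order of $q$ modulo $p$ and read off from Table~\ref{t-1} when $P$ is cyclic. For $G=\Sp_{2m}(q)$, one of the following must hold:
\begin{enumerate}
\item[(i)] $d$ is odd and $m/2<d\le m$, with $|P|=|\GL_m(q)|_p$;
\item[(ii)] $d$ is even and $m<d\le 2m$, with $|P|=|\GL_{2m}(q)|_p$.
\end{enumerate}
In case (i), $m\ge 2$ together with $d$ odd and $d>m/2$ forces $d\ge 3$; in case (ii), $d$ even and $d>m\ge 2$ forces $d\ge 4$. Thus $d\ge 3$ in all subcases under consideration.

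Next I would show $d\mid e_0$, which combined with $d\ge 3$ already contradicts $e_0\le 2$. By Fact \ref{brauer}, $e_0=|N_G(P)/C_G(P)|$. In case (i), the embedding $\GL_m(q)\hookrightarrow\Sp_{2m}(q)$, $A\mapsto\diag(A,A^{-t})$, carries a cyclic Sylow $p$-subgroup of $\GL_m(q)$ onto $P$; since $m/2<d\le m$ this Sylow lies inside a Singer-type cyclic subgroup $T\cong\mathbb{F}_{q^d}^*$ of a $\GL_d(q)$-Levi of $\GL_m(q)$, and the Frobenius automorphism $\alpha\mapsto\alpha^q$ of $T$ is realised by an element of $\GL_m(q)\le G$ normalising $P$ and acting faithfully on it with order $d$. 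In case (ii), $P$ lies inside a maximal torus of $\Sp_{2m}(q)$ of Coxeter type whose relative Weyl group is cyclic of order $d$, again giving an element of $N_G(P)/C_G(P)$ of order $d$. Either way $d\mid e_0$, so $e_0\ge d\ge 3$, the desired contradiction.

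The main obstacle is the torus-normaliser computation in case (ii): one has to identify the maximal torus of $\Sp_{2m}(q)$ containing $P$ and verify that its relative Weyl group has order exactly (or at least) $d$. This is a standard consequence of the theory of maximal tori in finite groups of Lie type of type $C_m$, but is the only step that is not immediate from the facts already assembled. Everything else reduces to comparing the ranges of $d$ permitted by Table \ref{t-1} with the constraint $e_0\le 2$ forced by seriality, and no discussion of non-principal blocks is required since the failure already occurs in $B_0$.
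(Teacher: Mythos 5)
Your proof is correct and follows essentially the same route as the paper: Fact \ref{polyg} plus the star criterion forces $e_0\le 2$, Table \ref{t-1} plus cyclicity of $P$ forces $d\ge 3$, and one then exhibits an element of $N_G(P)/C_G(P)$ of order $d$. The only divergence is in the even-$d$ case, where you appeal to the theory of maximal tori (and the torus you want is not literally the Coxeter torus of $\Sp_{2m}(q)$ unless $d=2m$, but rather one with a $(q^{d/2}+1)$-factor carrying a cyclic relative Weyl group of order $d$); the paper instead embeds $\Sp_d(q)\times\Sp_{2m-d}(q)$ block-diagonally and quotes Stather's explicit description \cite[L. 4.6]{Stat} of $N(P)/C(P)$ in $\Sp_d(q)$, which gives the same bound $e_0\ge d$ more concretely.
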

\begin{proof}
Let $d$ denote the order of $q$ modulo $p$. First we consider the case when $d=1$, i.e. $p$ divides $q-1$.
Because $m\geq 2$, each Sylow $p$-subgroup of $\GL_m(q)$ is not cyclic. The above mentioned diagonal
embedding shows that the same holds true for $\Sp_{2m}(q)$, contradicting seriality.

Otherwise $d\geq 2$ and we will show that the index $e= |N_G(P)/C_G(P)|$ is at least $d$. Because $e$ is the
number of edges in the Brauer tree of the principal block of $G$, which is a line, non-seriality would follow.

The order and structure of $P$ depends on the parity of $d$ - see Table 1.

\textbf{Case A: $d$ is even}. In this case $P$ coincides with a Sylow $p$-subgroup of the ambient group
$\GL_{2m}(q)$. Because $P$ is nontrivial and cyclic, we conclude that $m< d\leq 2m$. Further, $m\geq 2$
yields $d> 2$, hence $d\geq 4$.

Under a suitable choice of the matrix $W$, the group $\Sp_d(q)\times \Sp_{2m-d}(q)$ is embedded into
$\Sp_{2m}(q)$ as a blocked diagonal, hence $P$ can be chosen in the left upper corner $\GL_d(q)$. Thus
it suffices to show that the index $e_d= |N_{G_d}(P)/C_{G_d}(P)|$ is at least $d$.

Let $\al$ be a generator of a Sylow $p$-subgroup $P_d$ of $\Sp_d(q)$. From \cite[L. 4.6]{Stat}  it follows
that the factor $N_{G_d}(P_d)/C_{G_d} (P_d)$ is generated by an element $y$ acting by conjugation as
$\al^y= \al^d$. Because this action has order $d$, we conclude that $e_d\geq d\geq 4$, as desired.

\textbf{Case B: $d$ is odd}. From Table \ref{t-1} we derive that in this case the order of $P$ is the same
as the order of a Sylow $p$-subgroup $P'$ of $\GL_m(q)$. We may assume that $P$ is the image of $P'$, when
$\GL_m(q)$ is embedded into $\Sp_{2m}(q)$ diagonally (as above). Because $P'$ is cyclic, hence $m/2< d\leq m$.
Since $m\geq 2$ and $d$ is odd, we derive $d\geq 3$, hence it suffices to check that $e\geq d$.

As above we may assume that $m=d$. Consider an element $y'\in \GL_d(q)$ which acts by conjugation on $P'$
as an automorphism of order $d$. Then the diagonal image of this element belongs to the normalizer of $P$ and
acts as an automorphism of order $d$ on this subgroup.
\end{proof}

\section{Orthogonal groups}\label{S-orthog}

If $q$ is even, then the odd-dimensional orthogonal group $\O_{2m+1}(q)$ is isomorphic to the symplectic group
$\PSp_{2m}(q)$, and this case has been already considered. In the even-dimensional case we use
\cite{DLLB} as a guide for definitions.

\textbf{Case $+$.} A \emph{general orthogonal group of $+$ type}, $\GO_{2m}^+(q)$, consists of invertible
matrices $A$ of order $2m$ which preserve the nonsingular quadratic form given by the matrix
$W= \bsm 0&I_m\\ 0&0\esm$, i.e. $AWA^t= W$. One can define a morphism from this group to $\{\pm 1\}$ by
sending an element to $(-1)^k$,  where $k$ is the dimension of its fixed subspace when acting on $V$. The
kernel of this map is the \emph{orthogonal group}, $\O_{2m}^+(q)$, which is of index 2 in $\GO^+_{2m}(q)$.
The order of this group is the following.

$$
|\O_{2m}^+(q)|= q^{m(m-1)}\cdot (q^m-1)\cdot \prod_{i=1}^{m-1} (q^{2i}-1)\,.
$$

For $m\leq 3$ these groups are isomorphic to groups from other series, namely
$\O_4^+(q)\cong \PSL_2(q)\times \PSL_2(q)$ is not simple, and $\O_6^+(q)\cong \PSL_4(q)$, hence only the case
$m> 3$ is of potential interest.

\begin{prop}\label{plus}
Let $F$ be a field of characteristic $p$ dividing the order of the orthogonal group $G=\O_{2m}^+(q)$, where
$q$ is even and $m> 3$. Then the group ring $FG$ is not serial.
\end{prop}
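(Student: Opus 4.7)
The plan is to mirror Proposition~\ref{symp}. As in Section~\ref{S-simple} we may assume $p>2$ and $p\nmid q$; let $d$ denote the multiplicative order of $q$ modulo $p$.

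First I would dispose of $d=1$. Inspection of Table~\ref{t-1} shows that $|P|$ is the $p$-part of $|\GL_n(q)|$ for some $n\in\{m,2m-2,2m\}$, and a Sylow $p$-subgroup of $\GL_n(q)$ is cyclic only when $n/2<d\leq n$. Since $m>3$ forces $n\geq 4$ in every subcase, $d=1$ makes $P$ non-cyclic, contradicting Fact~\ref{hig}. Henceforth assume $d\geq 2$. By Fact~\ref{polyg} the Brauer tree of the principal block of $G$ is a line, so by Fact~\ref{star} it suffices to show that $e_0=|N_G(P)/C_G(P)|\geq 3$.

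If $d$ is odd, Table~\ref{t-1} gives $|P|=|\GL_m(q)|_p$, cyclicity forces $m/2<d\leq m$, and $m>3$ yields $d\geq 3$. The diagonal embedding $A\mapsto\diag(A,A^{-t})$ realises $\GL_m(q)$ as a subgroup of $\O^+_{2m}(q)$ preserving the hyperbolic form $\bsm 0&I_m\\0&0\esm$, so $P$ may be placed inside a Singer-cycle Sylow of $\GL_d(q)\subset\GL_m(q)$; the normaliser element used in Proposition~\ref{symp} then acts on $P$ with order $d\geq 3$. If $d$ is even, the first subcase of Table~\ref{t-1} is impossible (it would simultaneously require $d\geq 2m$ and $d\leq 2m-2$), so $|P|=|\GL_{2m}(q)|_p$; cyclicity gives $m<d\leq 2m$, and the boundary $d=2m$ is excluded because $p$ then fails to divide $|\O^+_{2m}(q)|$. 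Hence $m<d<2m$ with $d$ even, so $d\geq 6$. In this range the naive diagonal embedding is unavailable. My plan is to use the identity $(q^d-1)_p=(q^{d/2}+1)_p$ (which holds since $d$ is the exact order of $q$ mod $p$ and $p>2$) to recognise $P$ as a Sylow of $\O^-_d(q)$, embedded in $G$ via the block decomposition $\O^-_d(q)\times\O^-_{2m-d}(q)\hookrightarrow\O^+_{2m}(q)$ (the product of two minus types being of plus type). Inside $\O^-_d(q)$, the group $P$ lies in the twisted cyclic torus of order $q^{d/2}+1$, on which the matrix realisation of the Frobenius $x\mapsto x^q$ acts with order $d$. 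This delivers $e_0\geq d\geq 6$.

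The main obstacle I anticipate is the even-$d$ case, specifically verifying that the twisted torus and its Frobenius normaliser can indeed be exhibited inside $\O^+_{2m}(q)$ rather than only in the ambient $\GL_{2m}(q)$, and that the induced automorphism of $P$ has order $d$ and not some proper divisor. Once this is settled, $e_0\geq 3$ in every case where $P$ is cyclic, the Brauer tree of the principal block is a line with at least three edges, hence not a star, and Fact~\ref{star} rules out seriality of $FG$.
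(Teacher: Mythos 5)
Your argument is sound and follows the paper's overall architecture: bound $d$ using cyclicity of $P$ and Table~\ref{t-1}, show the principal block is a line with at least three edges, and invoke Fact~\ref{star}. The odd-$d$ case is essentially identical to the paper's. Where you diverge is the even-$d$ case: the paper simply cites Stather's \cite[L.~4.6]{Stat} to get $e'\geq d$ for a cyclic Sylow of $\GL_{2m}(q)$ sitting inside the orthogonal group, whereas you build the normalizer by hand, recognising $P$ inside the twisted torus $C_{q^{d/2}+1}$ of an $\O^-_d(q)$-block of the decomposition $\O^-_d(q)\times\O^-_{2m-d}(q)\hookrightarrow\O^+_{2m}(q)$ and letting the Frobenius act. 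Your arithmetic supporting this ($(q^d-1)_p=(q^{d/2}+1)_p$, the exclusion of $d=2m$, and $d\geq 6$) is correct, and the construction is exactly the standard one underlying Stather's lemma, so the "obstacle" you flag is real but routinely surmountable; your version is more self-contained at the cost of more verification. One technical wrinkle you should repair: Fact~\ref{polyg} is stated for $\GO^{\pm}_{2m}(q)$, not for the index-two subgroup $\O^+_{2m}(q)$, so you cannot apply it to $G$ directly. The paper's remedy is to compute everything in $H=\GO^+_{2m}(q)$ and transfer seriality of the principal block between $G$ and $H$ via Fact~\ref{feit}(2), since $[H:G]=2$ is prime to $p$. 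Adopting that also disposes of your other latent worry, namely whether the normalising elements (the diagonal image of the $\GL_d(q)$-normaliser, or the Frobenius element of the twisted torus) land in $\O^+_{2m}(q)$ rather than merely in $\GO^+_{2m}(q)$; and even if you insist on working in $G$, the loss is at most an index-two subgroup of the induced automorphism group, which is harmless because the order of the action is odd ($d\geq 3$) in one case and at least $6$ in the other.
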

\begin{proof}
As usual we may assume that $p> 2$ does not divide $q$, and that a Sylow $p$-subgroup $P$ of $G$ is nontrivial
and cyclic.  Since the index of $G$ in $H= \GO^+_{2m}(q)$ equals 2, hence $P$ is a Sylow $p$-subgroup of $H$.

If $p$ divides $q-1$, then $P$ is not cyclic, hence suppose that this is not the case. Thus, if $d$
denotes the order of $q$ modulo $p$, then $2\leq d\leq 2m-2$.

Recall (see Fact \ref{polyg}) that the Brauer tree of the principal block $B'_0$ of $H= \GO^+_{2m}(q)$ is a
line with $e'$ edges. Further, Fact \ref{feit} implies that the principal block of $G$ is serial iff the same
holds true for $H$, hence it suffices to prove that $e'> 2$.

We consider the possibilities for $P$ given in Table \ref{t-1}.

\textbf{Case A: $d$ is odd}. Then the size of $P$ equals $|\GL_m(q)|_p$. Since $P$ is nontrivial and cyclic, we
obtain $m/2< d\leq m$, hence $m\geq 4$ yields $d> 2$. Note that the image of $\GL_m(q)$ via the diagonal
embedding $A\mapsto \bsm A&0\\0&A^{\,-t}\esm$ is contained in $\GO^+_{2m}(q)$. Now, by considering the normalizer
of a Sylow $p$-subgroup in $\GL_d(q)$ and this embedding, we derive $e'\geq d> 2$, as desired.

\textbf{Case B: $d$ is even}. Then $d\leq 2m-2$ implies that the integer part of the fraction $d/2m$ is zero,
hence $P$ is a Sylow $p$-subgroup of $\GL_{2m}(q)$. Since $P$ is cyclic and nontrivial, we derive that $m< d$. 
Now, from \cite[L. 4.6]{Stat} it follows that $e'\geq d$. Then $d> 4$ yields $e'> 4$, as desired.
\end{proof}

\textbf{Case $-$.} Let $\ga$ be a primitive element of $\F_{q^2}$ and set $a= \ga+ \ga^q$, $b= \ga^{q+1}$.

A \emph{general orthogonal group of $-$ type}, $\GO_{2m}^-(q)$, consists of invertible matrices $A$ of order
$2m$ which preserve the nonsingular quadratic form given by the matrix
\rule{0pt}{7mm}$W= \bsm 0&I_{m-1}&0&0\\0&0&0&0\\0&0&1&a\\0&0&0&b\esm$,
i.e. $AWA^t= W$; and the \emph{orthogonal group}, $\O_{2m}^-(q)$, is defined as for the $+$ case.
The\rule{0pt}{4mm} order of this group is the following.

$$
|\O_{2m}^-(q)|= q^{m(m-1)}\cdot (q^m+1)\cdot \prod_{i=1}^{m-1} (q^{2i}-1)\,.
$$

For $m\leq 3$ these groups are isomorphic to groups from other series, namely $\O_4^-(q)\cong \PSL_2(q^2)$
(which we already know) and $\O_6^+(q)\cong \PSU_4(q)$ (which we consider later). Thus only the case $m> 3$
is of potential interest.

\begin{prop}\label{minus}
Let $F$ be a field of characteristic $p$ dividing the order of the orthogonal group $G=\O_{2m}^-(q)$, where
$q$ is even and $m> 3$. Then the group ring $FG$ is not serial.
\end{prop}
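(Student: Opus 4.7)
The plan is to mirror the proof of Proposition \ref{plus} as closely as possible. Since $[H:G] = 2$ is coprime to $p > 2$, Fact \ref{feit} reduces the question to the principal block of $FH$, where $H = \GO^-_{2m}(q)$ and $P$ remains a cyclic Sylow $p$-subgroup. By Fact \ref{polyg} the Brauer tree of the principal block of $H$ is a line, so it suffices to show it has at least three edges; equivalently, that $e' := |N_H(P)/C_H(P)| \geq 3$. As usual, we may assume $p \nmid q$, and also $p \nmid q - 1$ (else the diagonal torus of a natural $\GL_2(q)$ subgroup forces $P$ to be non-cyclic). Let $d \geq 2$ be the order of $q$ modulo $p$, and consult Table \ref{t-1}.

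In Case A (when $d$ is odd), $|P| = |\GL_{m-1}(q)|_p$, so cyclicity forces $(m-1)/2 < d \leq m - 1$, and $m \geq 4$ gives $d \geq 3$. The chain $\GL_{m-1}(q) \hookrightarrow \GO^+_{2m-2}(q) \hookrightarrow \GO^-_{2m}(q)$, obtained by first sending $A \mapsto \bsm A & 0 \\ 0 & A^{-t} \esm$ and then extending by the identity on an anisotropic 2-plane, places a Sylow $p$-subgroup of $\GL_{m-1}(q)$ inside a copy of $\GL_d(q)$; its Singer-normalizer element of order $d$, as in \cite[L. 4.6]{Stat}, survives in $N_H(P)$, yielding $e' \geq d \geq 3$.

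In Case B (when $d$ is even), either $d < 2m$ with $|P| = |\GL_{2m-2}(q)|_p$ (subcase B1, forcing $m-1 < d \leq 2m - 2$ and hence $d \geq 4$), or $d = 2m$ with $|P| = |\GL_{2m}(q)|_p$ (subcase B2, so $d \geq 8$). In subcase B1 with $d \leq 2m - 4$, a comparison of Sylow sizes from Table \ref{t-1} shows that $P$ is already a Sylow $p$-subgroup of the embedded $\GO^+_{2m-2}(q) \subset H$, so the argument of Case B of Proposition \ref{plus} applies verbatim to give $e' \geq d$. In the boundary case $d = 2m - 2$, the Sylow $P$ instead lives inside an embedded $\GO^-_{2m-2}(q) \subset H$ (the stabilizer of a hyperbolic 2-plane), and even inside its twisted Singer cyclic subgroup of order $q^{m-1} + 1$, whose normalizer quotient in $\GO^-_{2m-2}(q)$ has order $2(m-1) = d$. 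In subcase B2, $P$ is contained in the Coxeter-type cyclic subgroup of order $q^m + 1$ of $H$, whose normalizer quotient has order $2m$; hence $e' \geq 2m \geq 8$.

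The main obstacle is the case-by-case identification of the maximal torus of $H$ containing $P$ together with the structure of its normalizer, which is a routine application of the orthogonal analogue of \cite[L. 4.6]{Stat}. The subtlest step is the boundary case $d = 2m - 2$ of subcase B1: here $P$ is not captured by any $\GL_k(q)$-Levi subgroup of $H$ but rather lives inside a twisted torus of a smaller orthogonal subgroup, and this is where the $-$ case genuinely diverges from the treatment of Proposition \ref{plus}.
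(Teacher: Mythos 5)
Your proposal is correct and follows essentially the same route as the paper: reduce via Fact \ref{feit} to the principal block of $\GO^-_{2m}(q)$, invoke Fact \ref{polyg} to get a line, and then bound $e'=|N(P)/C(P)|$ from below by $d$ using Table \ref{t-1} and the normalizer element from \cite[L. 4.6]{Stat} in each parity case. The only difference is presentational: where the paper cites \cite[L. 4.6]{Stat} uniformly for the even-$d$ cases, you explicitly locate $P$ in a Coxeter-type torus of $\GO^-_{2m-2}(q)$ or $\GO^-_{2m}(q)$ in the boundary subcases $d=2m-2$ and $d=2m$, which is a harmless (and arguably more careful) elaboration of the same argument.
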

\begin{proof}
We use the same notations and assumptions as in the beginning of the proof of Proposition \ref{plus}. The only
difference is in the weaker conclusion $d\leq 2m$, because $d=2m$ may occur, when $p$ divides $q^m+1$. Again,
it suffices to show that $e'> 2$. From Table \ref{t-1} we obtain the following possibilities for $P$.

\textbf{Case B: $d$ is odd}. Then $P$ is the image, via the above skew diagonal embedding, of a Sylow 
$p$-subgroup $P'$ of $\GL_{m-1}(q)$. Since $P'$ is cyclic, from $(m-1)/2< d$, we conclude that $d\geq 3$. 
Applying \cite[L. 4.6]{Stat} we obtain $e'\geq d\geq 3$, as desired.

\textbf{Case A: $d$ is even}. This case splits in two subcases.

If the integer part of the fraction $d/2m$ is odd, then $d\leq 2m$ implies $d= 2m$. From Table \ref{t-1} we
see that $P$ is a Sylow $p$-subgroup of $\GL_{2m}(q)$. Since $P$ is cyclic, from $m< d$ we derive $d\geq 6$. 
Again \cite[L. 4.6]{Stat} gives $e'\geq d\geq 6$, as desired.

Otherwise the integer part of $d/2m$ is zero, hence $P'$ is a Sylow $p$-subgroup of $\GL_{2m-2}(q)$. Since
$P'$ is cyclic, therefore $m-1< d$ implies $d\geq 4$. One more application of \cite[L. 4.6]{Stat} yields
$e'\geq d\geq 4$.
\end{proof}

\section{Unitary groups}\label{S-unit}

It is a common knowledge that Fong--Srinivasan theorem also holds for $G= \GU_n(q)$ when $q$ is even, i.e. the
Brauer tree of each cyclic block of $G$ is a line. It would cut few lines from proofs, but is very difficult to 
find a proper reference. We will be content with the following.

\begin{lemma}\label{rob}
Let $G= \SU_n(q)$ be a quasi-simple special unitary group with a nontrivial cyclic Sylow $p$-subgroup. Then
the Brauer tree of the principal $p$-block of $G$ is a line.
\end{lemma}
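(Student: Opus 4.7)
Following the strategy in the proof of Fact \ref{polyg}, the plan is to show that every irreducible character of $G=\SU_n(q)$ is real-valued on $p$-regular classes. Granted this, every non-exceptional character of any cyclic block of $G$ lies on the real stem of its Brauer tree, while the exceptional characters take real values on $p$-regular classes by definition; so the tree coincides with its real stem, which is always a line. As in the proof of Fact \ref{polyg}, this reality of characters reduces to the statement that every $p$-regular element $g\in G$ is $G$-conjugate to~$g^{-1}$: then $\chi(g)=\chi(g^{-1})=\ov{\chi(g)}$ for each irreducible character $\chi$.

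For the ambient group $\wt G:=\GU_n(q)$ this is the unitary analogue of Gow's theorem in even characteristic. Writing the defining Hermitian form as $J$, the identity $gJ\ov g^{\,t}=J$ for $g\in\wt G$ rewrites as $g^{-1}=J^{-1}\ov g^{\,t}J$, which exhibits $g^{-1}$ as $\GL_n(q^2)$-conjugate to the Galois-transpose $\ov g^{\,t}$. Since $\ov g^{\,t}$ has the same rational canonical form as $g$, a short case analysis on the Jordan/partition data of $g$ (or, equivalently, an appeal to Wall's classification of unitary conjugacy classes) produces an element of $\wt G$ conjugating $g$ to $g^{-1}$.

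It remains to descend from $\wt G$ to $G$. Since $q$ is even, the quotient $\wt G/G$ is the determinant map onto $\mu_{q+1}\seq \F_{q^2}^*$, a cyclic group of odd order. If $h\in\wt G$ satisfies $h^{-1}gh=g^{-1}$, the whole coset $C_{\wt G}(g)\,h$ consists of such elements, and for a $p$-regular $g$ the centralizer $C_{\wt G}(g)$ (a direct product of smaller unitary and general linear groups, dictated by the eigenvalue structure of $g$) has surjective determinant onto $\mu_{q+1}$. Hence one can choose $h\in G$, giving $g\sim_G g^{-1}$; all characters of $G$ are then real-valued on $p$-regular classes, and the Brauer tree of the principal block is a line. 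The main difficulty is this descent step: verifying that the determinant map is surjective on $C_{\wt G}(g)$ for every $p$-regular $g$. This is where the oddness of $q+1$ and the cyclicity of the Sylow $p$-subgroup (which pins down the possible eigenvalue types of $g$) combine to secure the needed surjectivity without a full Fong--Srinivasan invocation.
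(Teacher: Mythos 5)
There is a genuine gap, and it occurs at the very first step. You reduce the lemma to the claim that every $p$-regular element $g$ of $\GU_n(q)$ is conjugate to $g^{-1}$, justifying this from $g^{-1}=J^{-1}\ov g^{\,t}J$ together with the assertion that $\ov g^{\,t}$ has the same rational canonical form as $g$. It does not: $\ov g^{\,t}$ is similar to $\ov g$, whose invariant factors are obtained from those of $g$ by applying $a\mapsto a^q$ to the coefficients, so its eigenvalues are the $q$-th powers of those of $g$. The identity therefore only shows that $g^{-1}$ is similar to $\ov g$, which holds for every unitary element and carries no reality information. In fact the reality claim is false: unitary groups, unlike the symplectic and orthogonal groups in characteristic $2$ covered by Gow's theorem, have non-real classes, and there is no unitary analogue of that theorem. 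Concretely, take $n=3$ and a regular element $g$ of the Coxeter torus of $\SU_3$, with eigenvalues $\{\lam,\lam^{q^2},\lam^{q^4}\}$ for $\lam$ of order $q^2-q+1$; then $g^{-1}$ has eigenvalues $\{\lam^{q},\lam^{q^3},\lam^{q^5}\}$, and the two multisets coincide only if $\lam^{q-1}=1$, which for $q$ even forces $\lam=1$. Such $g$ is $p$-regular whenever $p\nmid q^2-q+1$ (for instance when $p\mid q-1$, a case where the lemma must apply), and $\SU_3$ does have irreducible characters taking non-real values on these classes --- the paper's own example $\SU_3(4^2)$ with the characters $\chi_{19\text{-}22}$ illustrates exactly this. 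So no argument can establish that \emph{all} irreducible characters are real-valued on $p$-regular classes, and the determinant-surjectivity descent you single out as the main difficulty is moot, since the statement already fails at the level of $\GU_n$ and of $\GL_n(q^2)$-conjugacy (Wall's theorem cannot help when the eigenvalue data differ).

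The correct statement is block-specific, and this is what the paper uses: by Robinson \cite[Sect.~6]{Rob} (via class algebra constants) every non-exceptional character in the \emph{principal} block of $\SU_n(q)$ is rational-valued. Hence all non-exceptional vertices lie on the real stem; since the exceptional vertex is in any case fixed by the complex-conjugation automorphism of the tree, the whole tree coincides with its real stem and is therefore a line. If you wish to avoid the citation, the work to be done is to identify which characters lie in the principal block and verify their rationality directly, not to prove reality of all $p$-regular classes of $G$.
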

\begin{proof}
It follows from \cite[Sect. 6]{Rob} that each non-exceptional character in the principal block of $G$ is
rational-valued. By looking at the real stem we conclude the desired.
\end{proof}

Now it follows from \cite{Feit} that the Brauer trees of the principal blocks of $\SU_n(q^2)$ and $\GU_n(q^2)$
are obtained by unfolding the same line. Thus to prove that both trees are lines with the same number of
edges it sufficient to check that the indices $|N(P)/C(P)|$ in both groups are equal, but it is hard to find a
reference.

Let $\ov{\phantom{r}}$ denote the involution $\ov a= a^q$ of the Galois field $\F_{q^2}$ and let $V$ be an
$n$-dimensional vector space over this field. There exists (an essentially unique) non-singular
conjugate-symmetric sesquilinear form $f: V\times V\to \F_{q^2}$, i.e. $f(u,v)= \ov{f(v,u)}$ holds for all
$u, v\in V$. If $f$ is given by a matrix $W$, then $W= \ov W^t$.

A \emph{general unitary group}, $\GU_n(q^2)$, consists of matrices $A\in \GL_n(q^2)$ preserving $f$, i.e.
$A W\ov A^{\, t}= W$. In particular, if $W= I_n$, then $\GU_n(q^2)$ consists of matrices $A$ such that
$A\cdot \ov A^{\,t}= I_n$. The order of this group is the following.

$$
|\GU_n(q^2)|= q^{n(n-1)/2}\cdot (q+1)\cdot (q^2-1)\cdots\ldots\cdot (q^n- (-1)^n)\,.
$$

The unitary matrices of determinant 1 form a normal subgroup in $\GU_n(q^2)$ of index $q+1$, the
\emph{special unitary group}, $\SU_n(q^2)$. The center $Z$ of this group consists of scalar matrices, and
is of order $(n, q+1)$. The factor group $\PSU_n(q^2)= \SU_n(q^2)/Z$ is the
\emph{projective special unitary group}. If $n\geq 3$, then this group is simple, with $\PSU_3(2^2)$ being the
only exception.

Note that $\PSU_2(q^2)\cong \PSL_2(q)$ hence, when investigating seriality, we may assume that $n\geq 3$.
Thus it suffices to prove the following proposition.

\begin{prop}\label{unit}
Let $F$ be  a field of characteristic $p$ dividing the order of a simple unitary group $H=\PSU_n(q^2)$, where
$n\geq 3$ and $q$ is even. Then the group ring $FH$ is serial if and only if $n=3$ and $p>2$ divides $q-1$.
\end{prop}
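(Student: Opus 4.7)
The plan is to adapt the template of Sections \ref{symp} and \ref{S-orthog} to the unitary setting: reduce to $G=\GU_n(q^2)$, exploit that the Brauer tree of its principal block is a line, and show that seriality forces the index $e_0=|N_G(P)/C_G(P)|$ to be at most $2$, which combined with the Sylow-cyclicity data in Table \ref{t-1} will pin $(n,p,q)$ down to the stated case. The sufficiency direction, that $F\PSU_3(q^2)$ is serial when $p>2$ divides $q-1$, is case~4) of Theorem \ref{t-main} and follows from the known shape of the Brauer tree of $\PSU_3(q^2)$ in this characteristic (a star with centered exceptional vertex), so only the necessity requires new argument.

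For the necessity, suppose $FH$ is serial. By Fact \ref{feit} combined with the remark that $O_{p'}$ annihilates the principal block, and using that $|\GU_n(q^2):\SU_n(q^2)|=q+1$ and $|Z|=\gcd(n,q+1)$ are coprime to $p$ whenever $p\nmid q+1$, we transfer seriality of the principal block between $FH$, $F\SU_n(q^2)$, and $FG$. The case $p\mid q+1$ is disposed of quickly: then the order $d$ of $q$ mod $p$ is $2$, so $d\equiv 2\pmod 4$, and Table \ref{t-1} gives $|P|=|\GL_n(q^2)|_p$; with $q^2\equiv 1\pmod p$ and $n\geq 3$ this is non-cyclic, contradicting Fact \ref{hig}. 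Hence $p\nmid q+1$. Then by Lemma \ref{rob} together with the preceding observation (the Brauer trees of the principal blocks of $\SU_n$ and $\GU_n$ are obtained by unfolding the same line via \cite{Feit}), the Brauer tree of the principal block of $G$ is a line, whence Fact \ref{star} yields $e_0\leq 2$.

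The case analysis now proceeds via Table \ref{t-1} on $d \bmod 4$. In Type~B ($d\equiv 2\pmod 4$), we have $d\geq 6$ since $d=2$ is already excluded; the unitary analogue of the Singer-normalizer calculation in \cite[L.~4.6]{Stat}, applied to the maximal torus of $\GU_n(q^2)$ containing $P$, forces $e_0 \geq d/2 \geq 3$, contradicting $e_0\leq 2$. In Type~A ($d\not\equiv 2\pmod 4$), $P$ lies in the image of an embedded $\GL_{\lf n/2\rf}(q^2)\hookrightarrow \GU_n(q^2)$ (via $A\mapsto \diag(A,\ov{A}^{-t})$, augmented by a fixed middle coordinate when $n$ is odd); the Singer-cycle normalizer in $\GL_{\lf n/2\rf}(q^2)$ contributes $d'$ to $e_0$, where $d'$ is the order of $q^2$ modulo $p$, and the outer Weyl-type involution exchanging the two diagonal blocks within $\GU_n(q^2)$ boosts this to $e_0\geq 2d'$. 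Hence $d'=1$, so $p\mid q^2-1$, which combined with $p\nmid q+1$ yields $p\mid q-1$; the cyclicity condition $\lf n/2\rf/2 < d'=1 \leq \lf n/2\rf$ then forces $\lf n/2\rf = 1$, and with $n\geq 3$ we conclude $n=3$, as desired.

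The main obstacle is the precise computation of $|N_G(P)/C_G(P)|$ inside $\GU_n(q^2)$ for each Sylow type: the hermitian structure introduces a $\sig$-twist that modifies the usual $\GL$-normalizer calculation, and one must establish the factor-of-$2$ lower bound from the unitary Weyl involution in Type~A as well as the appropriate divisibility in Type~B. While \cite[L.~4.6]{Stat} handles $\GL$, $\Sp$, and $\GO$ explicitly, the unitary analogue requires fresh Frobenius-twist bookkeeping. A secondary subtlety is rigorously lifting the line-shape of the Brauer tree from $\SU_n(q^2)$ (handled by Lemma \ref{rob}) to $\GU_n(q^2)$; this is sketched in the discussion preceding the lemma via the unfolding procedure of \cite{Feit}, but must be pinned down to keep the proof self-contained.
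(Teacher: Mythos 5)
Your overall strategy matches the paper's: reduce to the principal block, use that its Brauer tree is a line (Lemma \ref{rob} plus the unfolding via \cite{Feit}), and rule out $e>2$ through embeddings of $\GL_m(q^2)$ and normalizer estimates. But the proposal defers exactly the two steps where the real work lies, and in one of them the paper uses a different idea that you do not have.

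The serious gap is your Case B ($d\equiv 2\pmod 4$). Your entire contradiction there rests on ``the unitary analogue of the Singer-normalizer calculation in \cite[L.~4.6]{Stat} \dots forces $e_0\geq d/2\geq 3$,'' and you yourself concede that this analogue ``requires fresh Frobenius-twist bookkeeping.'' That is not a bookkeeping issue: the element $y$ normalizing the Singer cycle of $\GL_n(q^2)$ and acting as $\al\mapsto\al^{q^2}$ has no reason a priori to lie in $\GU_n(q^2)$, and \cite{Stat} does not treat the unitary case, so the claimed bound is simply unproved. The paper sidesteps this entirely: when $f=n$ the centralizer $C_G(P)$ is contained in the Singer cycle and hence has \emph{odd order}; since $G$ has at least two classes of involutions and no involution in the annihilator of $B_0$, Blau's argument \cite[proof of Thm.~1 and Cor.~1]{Blau} shows $B_0$ is not a star, whence $e\geq 3$ because the tree is a line. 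This involution-counting step is the genuinely new ingredient your proof is missing. A second, smaller gap is in Case A: your uniform bound $e_0\geq 2d'$ from the ``outer Weyl involution'' needs the form matrix $W$ to actually normalize $P$ (i.e.\ a generator $A$ of $P'$ with $\ov{A}^{\,-t}\in\lan A\ran$ exactly, not just up to conjugacy), and one must also arrange determinant $1$ to stay inside $\SU_n(q^2)$. The paper only proves this normalization in the one sub-case where it is indispensable ($d=4$, $f=2$, $G=\SU_4(q^2)$, choosing $A$ with $A\cdot\ov{A}^{\,t}=I_2$), and elsewhere gets by with the weaker bound $e\geq f>2$ after adjusting determinants.

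Two further points. Your disposal of $p\mid q+1$ computes the Sylow $p$-subgroup of $\GU_n(q^2)$ from Table \ref{t-1}, but the group in question is $H=\PSU_n(q^2)$, and when $p\mid q+1$ the index $[\GU_n(q^2):\PSU_n(q^2)]=(q+1)\cdot(n,q+1)$ is divisible by $p$, so non-cyclicity does not descend automatically; the paper handles $n=3$ separately via Geck \cite[Thm.~6.1]{Geck} together with a direct check for $p=3\mid q+1$. Finally, citing ``case 4) of Theorem \ref{t-main}'' for sufficiency is circular, since this proposition is part of the proof of that theorem; the correct source is again Geck's computation of the Brauer tree of $\SU_3(q^2)$ for $p\mid q-1$.
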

\begin{proof}
First we consider the case $n=3$. If $p\mid q-1$, then it follows from Geck \cite[Thm. 6.1]{Geck} that the group
ring of $\SU_3(q^2)$ is serial. Since $H$ is a factor of this group, the group ring $FH$ is also serial.
Furthermore, it is also derived from Geck that no other serial rings occur for $\SU_3(q)$, because either $P$
is not cyclic, or the principal block is not serial. If $p$ does not divide $q+1$ or $3$, then both properties
are inherited by $G$. In the remaining case, when $H= \PSU_3(q^2)$ and $p=3$ divides $q+1$, it is easily
checked that each Sylow $3$-subgroup of $H$ is not cyclic.

Thus we may assume that $n\geq 4$. In this case, if $p$ divides $q\pm 1$, then each Sylow $p$-subgroup of $H$
is not cyclic. Otherwise, if $d$ is the order of $q$ modulo $p$, then $2< d\leq 2n$.

Further, the principal block of $H$ coincides with the principal block $B_0$ of $G= \SU_n(q^2)$, and the Brauer
tree of $B_0$ is a line. Thus it suffices to show that the number of edges $e$ in this line exceeds 2.
If $P$ denotes a Sylow $p$-subgroup of $G$ then it is a Sylow $p$-subgroup of $\GU_n(q^2)$. We consider
various possibilities for $P$ given in Table \ref{t-1}.

\textbf{Case B: $d\equiv 2 \pmod 4$.} In particular $d> 2$ yields $d\geq 6$.

In this case $P$ is a Sylow $p$-subgroup of the ambient group $\GL_n(q^2)$. Note that the order $f$ of $q^2$
modulo $p$ equals $d/2$. Because $P$ is cyclic, we conclude that $n/2< f\leq n$, i.e. $n< d\leq 2n$.

If $n=f$ then the centralizer $C_G(P)$ is contained in the Singer cycle, hence is of odd order. Because $G$ has
no involution in the annihilator of $B_0$ and at least two classes of involutions, as in Blau
\cite[proof of Thm. 1 and Cor. 1]{Blau}, we conclude that $B_0$ is not a star, hence $e\geq 3$ in this group.
By considering the diagonal embedding from $\SU_f(q^2)\times \SU_{n-f}(q^2)$ into $\SU_n(q^2)$, we obtain the same
conclusion for $\SU_n(q^2)$.

\textbf{Case A: $d\equiv 0, 1, 3 \pmod 4$.} Let $n= 2m+ \eps$, where $\eps= 0, 1$. From Table \ref{t-1} we see
that the order of $P$ equals the order of a Sylow $p$-subgroup $P'$ of $\GL_m(q^2)$. If $\eps=0$, then choosing
$W= \bsm 0&I_m\\I_m&0\esm$, we obtain the embedding from $\GL_m(q^2)$ into $\GU_n(q^2)$ which sends $A$ to
$\bsm A&0\\0& \ov{A}^{\,-t}\esm$, and a similar embedding takes place if $\eps=1$. Because $p$ does not divide 
$q^2-1$, the generator $\al'$ for $P'$ is in $\SL_n(q^2)$, hence its image belongs to $\SU_n(q^2)$.

Recall that $d$ is the order of $q$ modulo $p$, and $f$ is the order of $q^2$ modulo $p$. First we consider
the case when $d$ is \emph{odd}, hence $f= d> 2$.

Because $P'$ is cyclic and nontrivial, we conclude that $m/2< f\leq m$. As above, to estimate $e$, we may assume
that $n= 2d$. Choose an element in $\GL_d(q^2)$ which acts by conjugation on $P'$ as an automorphism of order
$f$. By multiplying by a constant we may assume that this element has determinant 1. Expanding diagonally, we
conclude that $e\geq f> 2$, as desired.

Thus it remains to consider the case when $d$ is \emph{divisible by 4}, in particular $d\geq 4$. Then $d= 2f$
yields $f\geq 2$. If $d> 4$, then, using the diagonal embedding, we conclude that $e\geq f> 2$.

Thus we may assume that $d=4$ and $f=2$, i.e. $G= \SU_4(q^2)$ and $p\mid q^2+1$. In this case the generator
$A=\al'$ for $P'$ can be chosen such that $A\cdot \ov A^{\, t}= I_2$. As above, using the diagonal embedding
we obtain that $e\geq 2$. But also the matrix $W$ normalizers $P$, hence $e\geq 4$, as desired.

In fact, $e=4$ in this case by looking at the generic character table for $\GU_4(q)$. Say, the character degrees 
can be obtained formally extending Carter \cite[p. 465]{Car} to the case of odd $q$.

$$
\vcenter{%
\def\labelstyle{\displaystyle}
\xymatrix@C=70pt@R=10pt{%
*+={\circ}\ar@{-}[r]\ar@{}+<0pt,10pt>*{_1}&
*+={\circ}\ar@{-}[r]\ar@{}+<0pt,10pt>*{_{q^3(q^2-q+1)}}&
*+={\bullet}\ar@{-}[r]\ar@{}+<5pt,10pt>*{_{(q-1)(q+1)^3(q^2-q+1)}}&
*+={\circ}\ar@{-}[r]\ar@{}+<0pt,10pt>*{_{q^6}}&
*+={\circ}\ar@{}+<0pt,10pt>*{_{q(q^2-q+1)}}\\
}}
$$

\vspace{3mm}

\end{proof}

\section{Small groups}\label{S-small}

In this section we will verify (modulo some calculations in MAGMA, which we omit) Conjecture \ref{c-main} for 
small groups.

\begin{prop}\label{small}
Conjecture 1 holds true for all groups $G$ of order $\leq 10^4$.
\end{prop}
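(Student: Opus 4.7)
The plan is to reduce the conjecture, for groups of order at most $10^4$, to a short finite computation of the kind illustrated in Example \ref{2-cov}. By Fact \ref{hig} it suffices to consider groups $G$ with a cyclic Sylow $p$-subgroup, and by Fact \ref{solv-ser} the $p$-solvable case is already handled. So throughout we may assume that $G$ is non-$p$-solvable with a cyclic Sylow $p$-subgroup, and we must show that $FG$ is serial if and only if $FH$ is, where $H = K/O_{p'}(G)$ is the simple nonabelian group provided by the Blau--Naer fact preceding Section \ref{S-simple}. Since $[G:K]$ is coprime to $p$, Fact \ref{feit}(2) lets us replace $G$ by $K$, and since $FH$ is a factor ring of $FK$, the implication ``$FK$ serial $\Rightarrow$ $FH$ serial'' is automatic; only the converse needs verification.

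The first step is to enumerate, via Theorem \ref{t-main}, the pairs $(H,p)$ with $|H|\le 10^4$ for which $FH$ is serial. The list of candidates for $H$ is short (about sixteen simple nonabelian groups, from $A_5$ up to $\PSL_2(27)$ and $M_{11}$), and for each the theorem picks out a small set of admissible primes. The second step is to enumerate the non-$p$-solvable extensions $K$ of order at most $10^4$ with $K/O_{p'}(K) \cong H$ and cyclic Sylow $p$-subgroup; the bound forces $|O_{p'}(K)| \le 10^4/|H|$, so that for each admissible $(H,p)$ only finitely many such $K$ arise, and these can be read off from the SmallGroups library in GAP \cite{gap}.

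The third step is, for each such $K$, to compute $FK$ over the prime field $\F_p$ (permitted by Fact \ref{indep}) in MAGMA \cite{magma} and to test seriality by checking, for each primitive idempotent $e$, that $\dim_F(\Jac^n(eFK)/\Jac^{n+1}(eFK)) \le 1$ for every $n$; equivalently, that the Loewy length of $eFK$ equals its $F$-dimension. Once every such $FK$ is confirmed serial, the conjecture follows for every $G$ with $K \triangleleft G$, $[G:K]$ coprime to $p$, $|G|\le 10^4$ by a final application of Fact \ref{feit}(2).

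The main obstacle is not conceptual but computational: one must produce and process the complete list of triples $(H, O_{p'}(K), K)$ within the order bound and run the seriality check on each. This remains manageable because Theorem \ref{t-main} severely restricts the admissible $(H,p)$, and because in the remaining range the nontrivial $p'$-extensions below $10^4$ reduce, up to Morita equivalence and by Fact \ref{feit}, to a small number of cases already accessible via Fact \ref{p-nilp} or via direct block analysis in the style of Example \ref{2-cov}.
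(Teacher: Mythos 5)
Your overall strategy coincides with the paper's: reduce to $K$ via Fact \ref{feit}(2), observe that ``$FK$ serial $\Rightarrow$ $FH$ serial'' is automatic because $FH$ is a factor ring of $FK$, enumerate the admissible pairs $(H,p)$ from Theorem \ref{t-main}, and machine-check the extensions $K=O_{p'}.H$. You do, however, omit two reductions that the paper uses to make the search tractable. First, in a counterexample $O_{p'}\neq 1$ (otherwise $G=H$ is simple and there is nothing to prove), so $|H|\leq 5\cdot 10^3$. Second, and more importantly: if $p=3$ and $|P|=3$, then every block of $FG$ has $e\mid e_0\mid p-1=2$ edges and exceptional multiplicity $(3-1)/e\leq 1$ when $e=2$, so every block is automatically a star with admissible exceptional vertex and $FG$ is serial with no computation at all. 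This kills all the characteristic-$3$ entries of Theorem \ref{t-main} with $|P|=3$ and shrinks the candidate list to exactly four pairs: $(\PSL_2(8),7)$, $(\PSL_2(11),5)$, $(\PSL_2(19),3)$ and $(\PSL_2(16),5)$. Without it your ``about sixteen groups'' times ``all admissible primes'' search is much heavier, and your proposal to read the extensions off the SmallGroups library will not work as stated, since the relevant extensions (e.g.\ $U.\PSL_2(8)$ with $|U|\leq 19$) have orders well above the range covered by that library; the paper instead constructs them directly with MAGMA's \texttt{ExtensionsOfSolubleGroup}.

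There is also one concretely wrong step: uniseriality of $eFK$ is \emph{not} equivalent to $\dim_F\bigl(\Jac^n(eFK)/\Jac^{n+1}(eFK)\bigr)\leq 1$ for all $n$, nor to the Loewy length equalling the $F$-dimension. The correct criterion is that each radical layer be simple or zero (Loewy length equals composition length), and over the prime field $\F_p$ the simple modules occurring here typically have endomorphism fields strictly larger than $\F_p$, hence $F$-dimension greater than $1$. Example \ref{2-cov} already exhibits this: the serial block $M_2(W)$ with $W=\F_9[y,\al]/(y^3)$ has principal projectives whose radical layers are $2$-dimensional over $\F_3$. As written, your test would reject serial group rings and the verification would fail; the fix is simply to test simplicity of each layer rather than its dimension.
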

\begin{proof}
Suppose otherwise. Thus, for some prime $p$, there exists a small non-$p$-solvable group $G$ with a cyclic
Sylow $p$-subgroup $P$ and the normal series $\{e\}\subset O_{p'}\subset K\subset G$ such that the group ring
of the simple group $H= K/O_{p'}$ over $\F_p$, but the ring $FG$ is not serial.

Suppose that $G$ is a minimal such counterexample. By Fact \ref{feit}, it follows that $K= G$ and $H$ is a
simple nonabelian finite group of order $\leq 10^4$, whose group ring is serial. Further, this fact implies that
$G$ contains no proper normal subgroup containing $P$. Also, if $p=3$ and $|P|=3$, then the multiplicity of
the exceptional vertex equals $(3-1)/2=1$. Thus in this case seriality of $H$ implies seriality of $G$,
a contradiction. Thus we may assume that, if $p=3$ divides the order $H$, then $|P|\geq 9$.

By these remarks, from the list of simple groups of size $\leq 5\cdot 10^3$ whose group rings are serial
(see Theorem \ref{t-main}) only the following are potential candidates for $H$ in the projected counterexample.

1) $H= \PSL_2(8)$ of order $504$, and $p=7$, $|P|= 7$.

This is the most difficult case: we have to check all extensions $G= U.H$, where $U= O_{p'}$ has order at most 
$19$, and we use the MAGMA command ExtensionsOfSolubleGroup for this search. For instance, for $|O_{p'}|=24$ 
there are $15$ extensions all of which contain $H$ as a normal subgroup.

2) $H= \PSL_2(11)$ of order $660$, and $p=5$, $|P|=5$.

In this case we have to investigate extensions $G= U.H$, where $U= O_{p'}$ has order at most $14$. For instance, 
the only extensions $C_2.H$ are the direct product $C_2\times H$; and $\SL_2(11)$, where the normalizer of $P$ 
coincides with $G$. However the $\SL$-series has been already considered: it follows from \cite{Kukh} that
the ring $FG$ is serial, a contradiction.

3) $H= \PSL_2(19)$ of order $3420$, and $p=3$, $|P|=9$.

Here the only case is $O_{p'}= C_2$. The only extension which is not a direct product is $\SL_2(19)$, for which
seriality is known.

4) $H= \PSL_2(16)$ of order $4080$, and $p=5$, $|P|=5$.

Here the only extension $C_2.H$ is the direct product $C_2\times H$.
\end{proof}

By pushing harder, one probably could improve the above estimate, but not beyond $60\cdot 504= 30,240$, 
because $|A_5|=60$ and $A_5$ is not solvable.

\section{Discussion}\label{S-dis}

There are few open question on serial group rings of finite groups we would like to address.

\begin{ques}\label{3}
Is Conjecture \ref{c-main} true for $p=3$?
\end{ques}

Note that in this case $e$ divides $2$, hence only the case when $e=2$ and $|P|\geq 9$ is of interest.
Further, under these restrictions, the list of simple groups in Theorem \ref{t-main} contains only groups
in $\PSL_2$, $\PSL_3$ and $\PSU_3$-series. Also, the principal block of $G$ is serial, hence we should
investigate non-principal blocks with large defect groups. If this block is non-serial, then $e=2$ and the
exceptional character occurs at the end of the line. Some information on values (hence degrees) of
characters in this block can be extracted from \cite[Thm. 7.2.16]{Feit-b}, but we were not able to draw
a decisive conclusion. The feeling is that, even in this case, the main conjecture is based on a little
empirical evidence.

The following simple question also shows our limits.

\begin{ques}\label{h}
Suppose that $F$ is a field of characteristic $p$ dividing the order of $G$ and let $H$ be a normal subgroup of
$G$. Is it true that, if $FG$ is serial, then $FH$ is serial?
\end{ques}

Of course,  modulo the main conjecture, this question has an affirmative answer.

Note that Blau \cite{Blau} considered a similar question: when the Brauer tree of the principal block of
a group $G$ is a star (with no restriction of the position of the exceptional character). This is the same
as each $p$-modular irreducible character in the principal block lifts to an ordinary character.

Comparing with seriality, there are some simplifications: from the very beginning one can factor out the
subgroup $O_{p'}$. The list of finite simple groups satisfying Blau's condition will be a bit larger than
the one given in Theorem \ref{t-main}. For instance, the Mattieu group $M_{23}$ will get there when $p=5$,
and there will be more varieties of Suzuki and Ree groups. However, the difficulty may lay when passing from a
normal subgroup of coprime index to $G$ and vice-versa.

By $\Z_{(p)}$ we will denote the localization of integers with respect to a prime ideal $p\,\Z$.

\begin{ques}\label{zp}
Describe finite groups $G$ such that the group ring $\Z_{(p)}G$ is serial.
\end{ques}

Of course, seriality of this ring implies seriality of the ring $\F_p G$. Further, because the valuation
domain $\Z_{(p)}$ is not complete, the indecomposable idempotents lift rarely, hence the list of simple groups in
Theorem \ref{t-main} will get scarce. However we will face the other difficulties, starting from the lack
of Maschke's theorem.

%\clearpage

\end{document}